\documentclass[11pt, french,english]{article}
\usepackage{amsmath,bm,epsfig,amssymb,mathrsfs,amsthm,amstext,esint,babel,amsfonts,mathtools,epstopdf,ifthen}
\usepackage{geometry} \usepackage[T1]{fontenc} \usepackage[latin9]{inputenc} \usepackage{graphicx,color,soul,dsfont,enumerate}
\usepackage{subfloat,subcaption}
\usepackage{enumerate}
\usepackage{pifont}
\usepackage{bbm}
\usepackage{upgreek,bm}
\usepackage{arydshln}
\newcommand{\cmark}{\ding{51}}%
\newcommand{\xmark}{\ding{55}}%

\newtheorem{proposition}{\textbf{Proposition}}
\newtheorem{lemma}[proposition]{\textbf{Lemma}}
\newtheorem{corollary}[proposition]{\textbf{Corollary}}
\newtheorem{theorem}[proposition]{\textbf{Theorem}}
\newtheorem{definition}[proposition]{\textbf{Definition}}


\def\R{{\mathcal{R}}}

\def\Z{ \mathbb{Z}}

\def\R{ \mathbb{R}}

\def\tvarphi{\tilde{\varphi}}
\def\tupvarphi{\tilde{\varphi}}
\def\jj{\mathrm{j}}
\def\ee{\mathrm{e}}
\def\R{\mathbb{R}}
\def\dint{\mathrm{d}}

\title{Support and Approximation Properties of Hermite Splines}
\author{Julien Fageot\footnote{Biomedical Imaging Group, \'Ecole Polytechnique F\'ed\'erale de Lausanne (EPFL), Station 17, 1015 Lausanne, Switzerland.}, Shayan Aziznejad\footnotemark[1], Michael Unser\footnotemark[1], Virginie Uhlmann\footnotemark[1] \footnote{European Bioinformatics Institute (EMBL-EBI), Wellcome Genome Campus, Cambridge CB10 1SD, UK.}  \footnote{Corresponding author. \newline%
Contact: julien.fageot@epfl.ch; shayan.aziznejad@epfl.ch; michael.unser@epfl.ch; uhlmann@ebi.ac.uk.}  %
}

\begin{document}
\maketitle

\textbf{Abstract:} In this paper, we 
formally investigate two mathematical aspects of Hermite splines which translate to features that are relevant to their practical applications.
We first demonstrate that Hermite splines are maximally localized in the sense that their support sizes are minimal among pairs of functions with identical reproduction properties.
Then, we precisely quantify the approximation power of Hermite splines for reconstructing functions and their derivatives, and show that they are asymptotically identical to cubic B-splines for these tasks.
Hermite splines therefore combine optimal localization and excellent approximation power, while retaining interpolation properties and closed-form expression, in contrast to existing similar approaches.
These findings shed a new light on the convenience of Hermite splines for use in computer graphics and geometrical design.

\section{Introduction}
In his seminal $1973$ monograph on cardinal interpolation and spline functions~\cite{Schoenberg1973book}, I.J. Schoenberg explains and characterizes B-spline interpolation, which still inspire researchers and yield exciting applications nowadays. In the same work, he also sets the basis of Hermite interpolation~\cite{Lipow1973,Schoenberg1973}. In the classical B-spline framework, a continuous-domain function is constructed from a discrete sequence of samples. By contrast, the Hermite interpolation problem involves two sequences of discrete samples, which impose constraints not only on the resulting interpolated function but also on its derivatives up to a given order. 

Curves in the plane or surfaces in the volume can be constructed from one-dimensional interpolation schemes $\mathbb{R}\rightarrow\mathbb{R}$ by interpolating along each spatial coordinate. The practical value of Hermite splines in this context is to offer tangential control on the interpolated curve. This can be easily understood through their link with B\'ezier curves~\cite{Farouki2012}. The latter lie at the heart of vector graphics and are popular tools for computer-aided geometrical design and modelling \cite{Prautzsch2013, Farin2002, Bohm1984}. 
Hermite splines are also an interesting option for the design of multiwavelets, that is, wavelets with multiple generators~\cite{Dahmen2000,Warming2000} because of their small support.
In practice, Hermite splines thus provides a suitable solution to a number of problems, whether with respect to simplicity of construction, efficiency, or convenience. 
This hands-on intuition can be translated to formal properties of Hermite splines and mathematically characterized.
As examples, the joint interpolation properties of Hermite splines (see Section~\ref{sec:introHer}), which ensure that, at integer values, the interpolated function exactly matches the sequences of samples and derivatives samples that were used to build it; their smoothness properties~\cite{Uhlmann2016}, which guarantee low curvature of the interpolated curve under some mild conditions; and their statistical optimality (in terms of MMSE) for the reconstruction of second-order Brownian motion from direct and first derivative samples~\cite{Uhlmann2015}.
In that spirit, we investigate in this work the theoretical counterpart of two additional features that are observed to grant Hermite splines their practical usefulness. 

\subsection{Contributions}
Our contributions state the minimal support property of Hermite splines and investigate their approximation power. In the following, we describe the practical observations motivating them, the results themselves, and related works.

\paragraph{Minimal support property.} The short support of Hermite splines is an important feature making them attractive in practice. The size of the support relates to the local extent of modifications on the continuously-defined spline curve. 
We formally demonstrate that Hermite splines have the minimal support property among basis functions generating cubic and quadratic splines. 
When dealing with B-splines, there is a trade-off between the ability to reproduce smooth functions, which only depends on the B-spline order and thus maximum polynomial degree, and the possibility to allow for sharp transitions, as the regularity of the spline increases with its order. Typically, cubic splines can efficiently reproduce smooth functions, but lack the power to capture irregular transitions. On the other hand, quadratic splines have lesser approximation power, but are preferred when dealing with sharp transitions.
Hermite splines combine these two strengths in one scheme and are, in terms of support size, better than any two-functions scheme, including the one composed of the classical cubic and quadratic B-splines. In addition, we also show that one necessarily requires two generators to achieve this optimality. This result relates to similar ones involving a single generator~\cite{DelgadoGonzalo2012,Blu2001}.

\paragraph{Rate of decay of the approximation error.} 
Hermite splines can provide faithful approximation reasonably fast as the number of parameters increases. 
 This feature relates to the rate of decay of the approximation error. Numerous works approach these questions by restricting themselves to a specific interpolation framework, such as~\cite{Hall1968,Birkhoff1967} for the specific case of Hermite approximation. They provide precise estimations of the optimal bound on the approximation error relying on $L_\infty$ norms, but do not allow comparison with other schemes. In contrast, approaches have been developed for single~\cite{Blu1999a} and multiple generators~\cite{Blu1999b} to provide a unifying setting for comparison. Their rigorous mathematical framework offer generalized measures that can be applied to a wide variety of basis functions for estimating approximation constants. Hermite interpolation however violates some of the core assumptions of the analysis, preventing it from being studied with these tools.
Taking strong inspiration from this previous work, we provide a novel study of Hermite approximation with a analogous analysis strategy, relieving the boundedness assumptions and allowing to consider additional spline approximation schemes.
We precisely quantify the rate of decay of the approximation error of the Hermite scheme and quantitatively estimate the corresponding approximation constants. Hermite splines have excellent approximation properties when it comes to reconstructing a function and its first derivative, actually being close to optimal. 
The investigation of approximation error on the derivative relates to~\cite{condat2011reconstruction,condat2011quantitative}, although following a completely different line: while they focused on the reconstruction of the derivative from signal samples, the Hermite scheme grants direct access on the function and derivative samples, allowing to reconstruct the derivative in a multi-function setting.
\newline\newline
Pioneering works cover the study of spline schemes to an impressive degree of generality~\cite{de1994approximation,deboor1994structure,de1998approximation,holtz2005approximation}. They include results on minimum support and approximation errors in a large variety of cases. Regarding minimum support, these previous results are however restricted to single generator schemes. To the best of our knowledge, we are not aware of previous work considering multi-generators and therefore covering the Hermite case. 
These studies also do not focus on providing a way to quantify the approximation error constants, thus not enabling comparison between schemes of the same order. As mentioned beforehand, frameworks have been proposed to address this, but the Hermite scheme falls outside of their core hypotheses, therefore requiring a formal adaptation.

\subsection{Hermite Splines}\label{sec:introHer}
Schoenberg defines the cardinal cubic Hermite interpolation problem as follows~\cite{Lipow1973,Schoenberg1973}. Knowing the discrete sequences of numbers $c[k]$ and $d[k]$, $k\in\mathbb{Z}$, we look for a continuously-defined function $f_\mathrm{Her}(t)$, $t\in\mathbb{R}$, satisfying $f_\mathrm{Her}(k)=c[k]$, $f'_\mathrm{Her}(k)=d[k]$ for all $k\in\mathbb{Z}$, such that $f_\mathrm{Her}$ is piecewise polynomial of degree at most $3$ and once differentiable with continuous derivative at the integers.
The existence and uniqueness of the solution is guaranteed~\cite[Theorem $1$]{Lipow1973} for any sequences $c = (c[k])$ and $d= (d[k])$ bounded by a polynomial, but we shall restrict to sequences in $\ell_2(\mathbb{Z})$ thereafter. 
In \cite{Schoenberg1973}, it is shown that the Hermite spline $f_{\mathrm{Her}}$ associated to the sequences $c$ and $d$ can be expressed as 
\begin{align} \label{eq:fHercd}
	f_{\mathrm{Her}}(t) = \sum_{k \in \Z}  c[k]\phi_1(\cdot-k) + d[k]\phi_2(\cdot-k)
\end{align}
where the functions $\phi_1$ and $\phi_2$ are given by
\begin{align} 
\phi_1(t)&=(2|t|+1)(|t|-1)^2 \mathbbm{1}_{0\leq |t|\leq 1}, \label{eq:phi1} \\
\phi_2(t)&= t(|t|-1)^2  \mathbbm{1}_{0\leq |t| \leq 1}. \label{eq:phi2}
\end{align}
In addition to their fairly simple analytical expression, the cubic Hermite splines have other important properties. First, they are of finite support in $[-1,1]$. Moreover, the generating functions $\phi_1$, $\phi_2$ and their derivatives $\phi'_1$, $\phi'_2$ satisfy the joint interpolation conditions 
\begin{align}\label{eq:interpcond}
\phi_1(k)=\delta[k], \quad\phi'_2(k)=\delta[k], \quad
\phi'_1(k)=0, \quad
\phi_2(k)=0,
\end{align}
for all $k \in \mathbb{Z}$, where $\delta[k]$ is the discrete unit impulse. The functions and their first derivative are depicted in Figure~\ref{fig:Hermite}, where the interpolation properties can easily be observed. The functions $\phi_1$ and $\phi_2$ are deeply intertwined
as $c[k] = f_{\mathrm{Her}}(k)$ and $d[k] = f'_{\mathrm{Her}}(k)$ in \eqref{eq:fHercd}. 
The cubic Hermite splines are differentiable with continuous derivatives at the integer knots points $t=k$. As a result, functions generated by cubic Hermite splines are $C^1$-continuous piecewise-cubic polynomials with knots at integer locations. 


\begin{figure}
\begin{center}
\begin{subfigure}[b]{0.45\textwidth}
                \includegraphics[width=\textwidth]{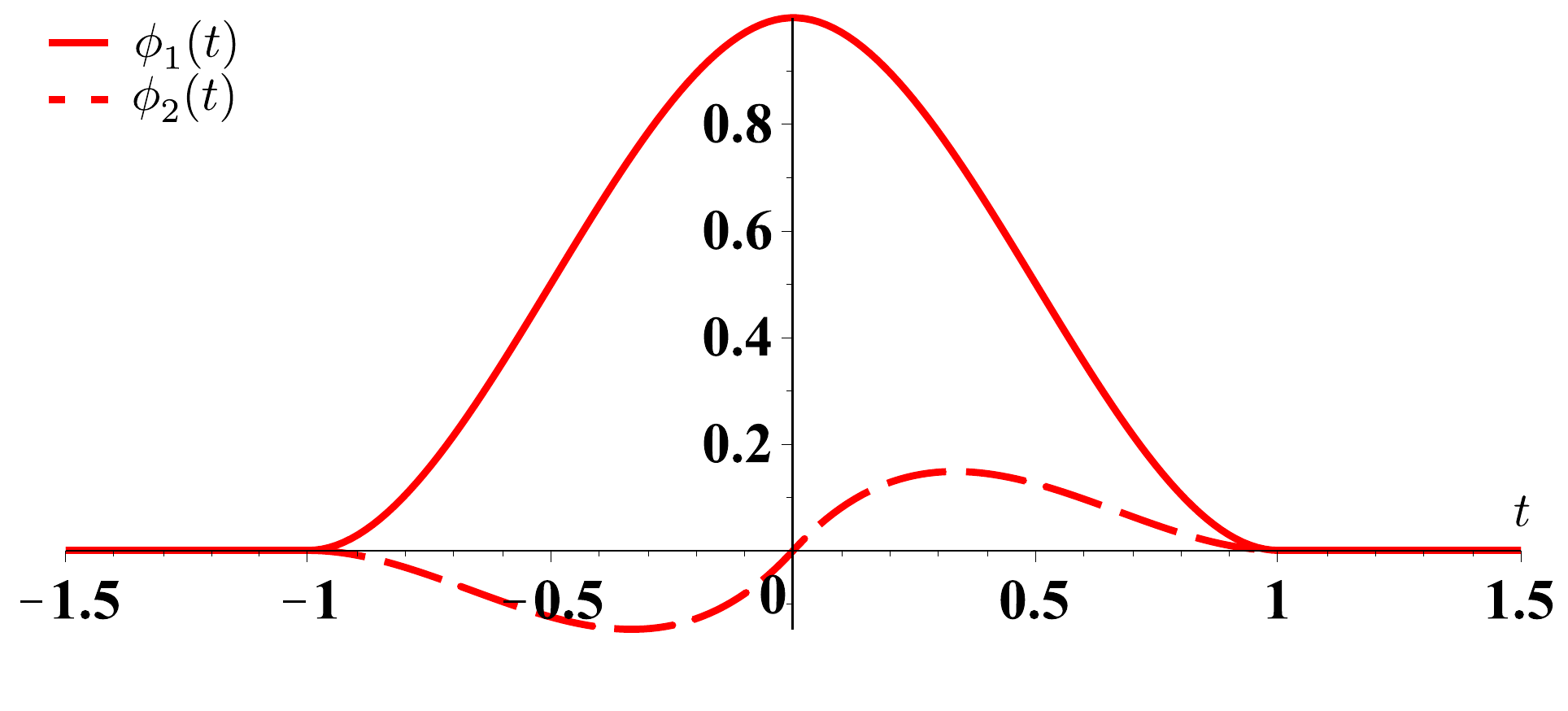}
                \label{fig:phi}
\end{subfigure}
\begin{subfigure}[b]{0.45\textwidth}
                \includegraphics[width=\textwidth]{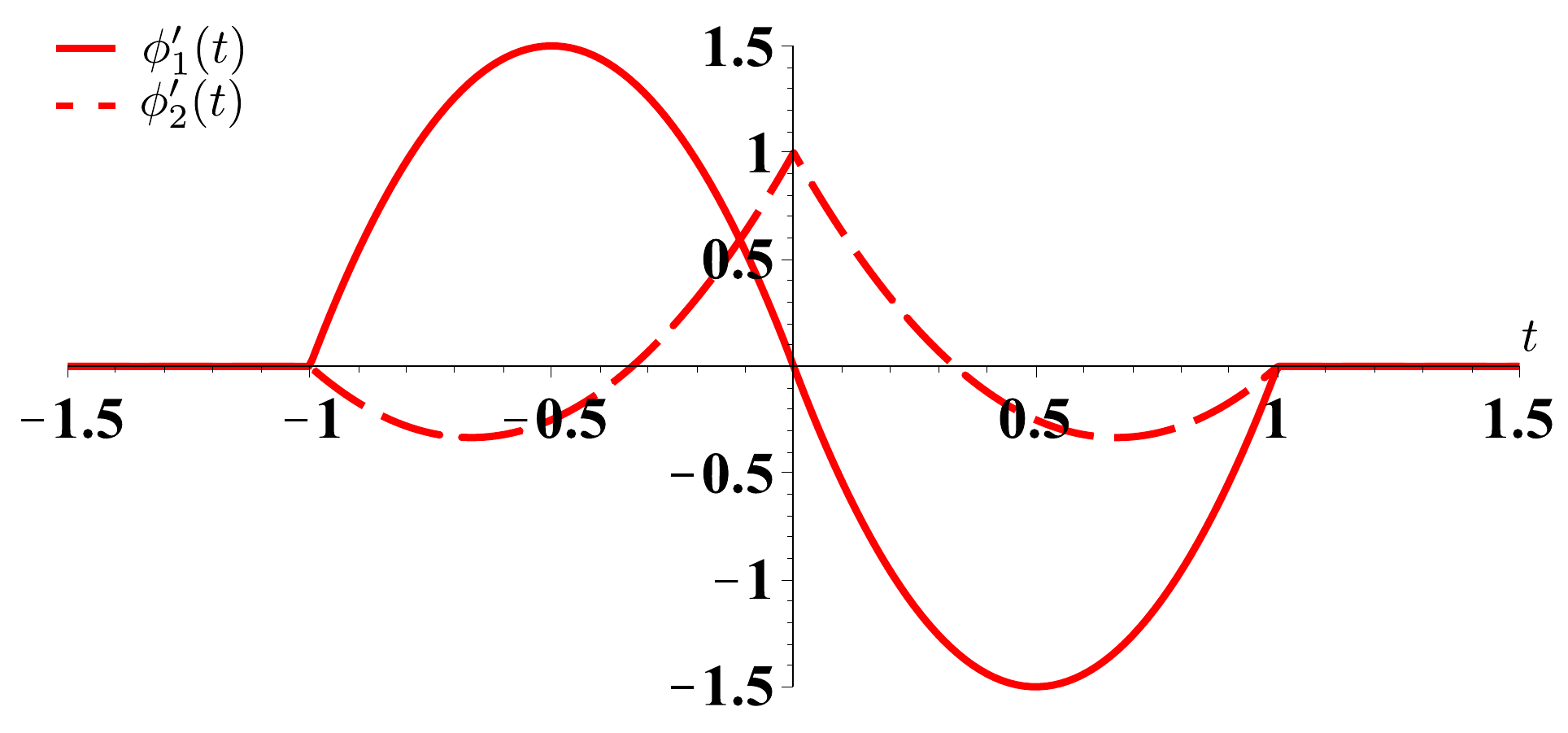}
                \label{fig:phip}
\end{subfigure}
\caption{\label{fig:Hermite}Cubic Hermite splines $\phi_1$ and $\phi_2$. The two functions and their derivatives are vanishing at the integers with the exception of $\phi_1(0)=1$ and $\phi_2'(0)=1$ (interpolation properties). They are supported in $[-1, 1]$.}
\end{center}
\end{figure}

\section{Minimum Support Properties of Hermite Splines} \label{sec:supporthermite}
	B-splines are known to be maximally localized, meaning that they are compactly supported with minimal support properties among functions with the same approximation properties~\cite{ron1990factorization,blu2001moms}. 
	Hermite splines possess a similar fundamental minimal support property: they are of minimal support among the pair of functions being able to generate both quadratic and cubic splines (Theorem \ref{theo:minimalsupport}). In addition, a single generating function is not sufficient for this purpose (Proposition \ref{prop:lesdeuxfontlapaires}).
	This demonstrates that Hermite splines are maximally localized for the purpose of representing piecewise-cubic functions that are continuously differentiable, as exploited for instance in image processing~\cite{Uhlmann2016}.

	\subsection{Shift invariant spaces and support properties}\label{sec:shiftinvariant}	
	
Consider a set of $N \geq 1$  functions $\bm{\varphi}=(\varphi_1 , \ldots , \varphi_N) \in (L_2(\R))^N$ and define the space of functions
		\begin{align}\label{eq:outspace}
		V(\bm{\varphi})&=\left\{ \sum_{i=1}^{N}\sum_{k\in\mathbb{Z}} c_i[k]\varphi_i( \cdot- k) : \mathbf{c} = (c_1, \ldots , c_N) \in (\ell_2(\mathbb{Z}))^{N} \right\} .
		\end{align}
		We say that $\varphi_i$ are \emph{basis functions}  of the set $V(\bm{\varphi})$. 
		The space $V(\bm{\varphi})$ is \emph{shift-invariant} in the sense that, by definition, $f(\cdot - k)$ is in $V(\bm{\varphi})$ for $f \in V(\bm{\varphi})$ and $k \in \Z$~\cite{de1994approximation,deboor1994structure,de1998approximation,jetter2001shift}.
	
		We consider shift-invariant spaces generated by a single ($N=1$) or two functions ($N=2$), corresponding to B-splines and Hermite splines, respectively.
		We only consider spaces \eqref{eq:outspace} for which 
		the family $\left(\varphi_{i} (\cdot - k) \right)_{i=1\cdots N, k \in \Z}$ is a Riesz basis; that is,  
\begin{align} \label{eq:Rieszcondition}
A\sum_{i=1}^{N}\sum_{k\in\mathbb{Z}} c_i[k]^2 \leq \left\|\sum_{i=1}^{N}\sum_{k\in\mathbb{Z}} c_i[k]\varphi_i( \cdot - k)\right\|^2_{L_2} \leq B \sum_{i=1}^{N}\sum_{k\in\mathbb{Z}} c_i[k]^2
\end{align} 
for any $c_i\in\ell_2(\mathbb{Z})$ with $i=1,\ldots,N$ and for some constants  $0<A\leq B<\infty$.
		This ensures that any $f \in V(\bm{\varphi})$ has a unique and stable representation.
		
		
		\paragraph{Support properties of B-splines.}
		Under the Riesz condition, a natural question is the ability of the basis functions $\bm{\varphi}$ to exactly reproduce classes of functions. 
		The possibility to perfectly reproduce polynomials is of crucial importance.
		The constant function $1$ can be reproduced if and only if the basis functions satisfy the partition of unity, which is the minimal requirement for a practical approximation scheme~\cite{de1985partitions}.
		
		The polynomial B-spline of order $L \geq 1$ is classically known to be able to reproduce polynomials up to degree $(L-1)$; that is, $t^{\ell}$ for every $\ell = 0, \ldots , (L-1)$. Following \cite{Unser1999}, we denote it by $\beta^{(L-1)}$.
		In particular, the cubic B-spline (of order $L=4$), can perfectly reproduce any polynomial of degree at most $3$. 
	The ability of the basis functions $\bm{\varphi}$ to perfectly reproduce polynomials is intimately linked to their approximation power, as will be developed in Section~\ref{sec:approximation}. B-splines are actually the most localized functions satisfying this property, as formalised in Proposition \ref{prop:BsplineMS}.
	
		\begin{proposition} \label{prop:BsplineMS}
		Let $\varphi \in L_2(\R)$ be a function such that $(\varphi(\cdot - k))_{k \in \Z}$ is a Riesz basis and can reproduce polynomials up to degree $(L-1) \geq 0$. 
		Then the support of $\varphi$ is at least of size $L$. 
		
		In particular, the B-spline of order $L$, whose support is of size $L$, is optimal in terms of support localization among basis functions that are able to reproduce polynomials of degree at most $(L-1)$. 
	\end{proposition}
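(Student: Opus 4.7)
The plan is to argue by contradiction with a local dimension count. Suppose $\varphi$ is compactly supported in an interval of length $M < L$ (if $\varphi$ does not have compact support, then the support is of infinite size and the claim is trivial). The key idea is that on any sufficiently small open interval $I \subset \R$, only finitely many shifted copies $\varphi(\cdot - k)$ are nonzero, and when $M < L$ this number can be arranged to be at most $L - 1$. This is too few to accommodate the $L$-dimensional space of polynomials of degree at most $L - 1$, which will yield the contradiction.

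Concretely, I would first show that there exists an open interval $I$ for which the set $K_I = \{k \in \Z : \varphi(\cdot - k) \not\equiv 0 \text{ on } I\}$ satisfies $|K_I| \leq L - 1$. If $\varphi$ is supported in $[a, a + M]$, then $\varphi(t_0 - k) \neq 0$ requires $k \in [t_0 - a - M,\, t_0 - a]$, an interval of length $M$. For a generic $t_0$ (avoiding the countable set of integer translates of $a$ and $a + M$), the number of integers in this interval is either $\lfloor M \rfloor$ or $\lfloor M \rfloor + 1$, and an appropriate choice of $t_0$ attains the lower value. Since $M < L$, this yields $|K_I| \leq L - 1$ on a small enough neighbourhood $I$ of such a $t_0$.

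Next I would invoke polynomial reproduction: for each polynomial $p$ of degree at most $L - 1$, there exists a sequence $(c_p[k])_{k \in \Z}$ such that $p(t) = \sum_{k \in \Z} c_p[k]\, \varphi(t-k)$ holds pointwise. Because $\varphi$ has compact support, this sum is locally finite, and on $I$ it reduces to the finite sum $p(t) = \sum_{k \in K_I} c_p[k]\, \varphi(t-k)$. Therefore the restriction $p|_I$ lies in the linear span of $\{\varphi(\cdot - k)|_I : k \in K_I\}$, a subspace of $C(I)$ of dimension at most $|K_I| \leq L - 1$.

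To close the argument, I would observe that the restriction map from polynomials of degree at most $L-1$ to $C(I)$ is injective, since a nonzero polynomial of degree less than $L$ has at most $L-1$ zeros. Its image is therefore $L$-dimensional, contradicting its containment in a space of dimension at most $L - 1$, and hence $M \geq L$ as claimed. The optimality statement then follows from the classical facts that the B-spline $\beta^{(L-1)}$ is supported on an interval of length exactly $L$, has $\Z$-shifts forming a Riesz basis of $L_2(\R)$, and reproduces all polynomials of degree at most $L-1$, so it realizes the lower bound. The only real delicacy in the plan is the choice of $t_0$ in the first step, which requires some care when $M$ is close to an integer; the remainder is a clean linear-algebra argument.
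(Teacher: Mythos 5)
Your argument is correct, and it is worth noting that the paper does not actually prove Proposition~\ref{prop:BsplineMS}: it is stated as classical and deferred to Schoenberg and to~\cite[Theorem 1]{Blu2001}, where the standard route is a Fourier-domain/Strang--Fix argument (factor out $(1-\mathrm{e}^{-\mathrm{j}\omega})^L/(\mathrm{j}\omega)^L$ from $\widehat{\varphi}$, conclude that $\varphi^{(L)}$ is a finite sum of shifted Dirac impulses at integers, and count). Your local dimension count is a genuinely different and more elementary route: it uses no Fourier analysis, does not need the Riesz hypothesis, and works for arbitrary (not necessarily summable or band-structured) reproduction sequences because compact support makes the reproducing sum locally finite. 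The counting step is sound: for $\varphi$ supported in $[a,a+M]$ the relevant index set at $t_0$ is $\Z\cap[t_0-a-M,\,t_0-a]$, and a generic $t_0$ with small fractional offset gives exactly $\lfloor M\rfloor\leq L-1$ integers, a count that is stable under shrinking to a small open $I$; since a nonzero polynomial of degree at most $L-1$ cannot vanish on an open interval (even almost everywhere), the restriction of the $L$-dimensional polynomial space to $I$ cannot sit inside a span of $L-1$ functions. Two small caveats to make explicit if you write this up: first, the identities $p=\sum_k c_p[k]\varphi(\cdot-k)$ hold only almost everywhere since $\varphi\in L_2(\R)$, so the linear algebra should be carried out in the space of a.e.-equivalence classes on $I$ (this costs nothing, as polynomials remain linearly independent there); second, your argument establishes that the support cannot be contained in an interval of length less than $L$, i.e.\ it bounds the length of the smallest supporting interval rather than the Lebesgue measure of the support --- this matches how the paper uses ``size of support'' (the B-spline of order $L$ supported on an interval of length $L$, the Hermite splines on $[-1,1]$ of ``size two''), but if one insisted on the measure interpretation you would need the averaging variant of your count (the $1$-periodic function $N(t)=\#\{k: t-k\in\mathrm{Supp}\,\varphi\}$ has mean $|\mathrm{Supp}\,\varphi|$, hence is at most $L-1$ on a set of positive measure, which is still a uniqueness set for polynomials).
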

	
		This result is classical in approximation theory: Schoenberg showed that B-splines effectively have the adequate approximation order~\cite{Schoenberg1973book}. A complete characterization of the functions of minimal support with a given approximation order can be found in~\cite[Theorem 1]{Blu2001}.
			To the best of our knowledge, very little is known about the localization of basis functions when $N>1$, which is what we propose to investigate.

	\subsection{Minimal Support Properties for Two Basis Functions}

	Hermite splines $\phi_1$ and $\phi_2$, given by \eqref{eq:phi1} and \eqref{eq:phi2}, are able to reproduce both $\beta^2$ and $\beta^3$, the quadratic and cubic B-splines of order $3$ and $4$, respectively~\cite{Uhlmann2016}.
	In particular, this means that $V(\phi_1,\phi_2)$ contains polynomials of degree at most $3$.
	Many other pairs of basis functions, starting with $\beta^2$ and $\beta^3$ themselves, can also reproduce quadratic and cubic splines.
	In line with Proposition \ref{prop:BsplineMS}, investigating the support localization of a pair of functions having the same reproduction properties as the Hermite splines follows naturally. This boils down to studying basis functions for which $\beta^2,\beta^3 \in V(\varphi_1,\varphi_2)$.
	
	We characterize the support size of such pairs of functions in Theorem \ref{theo:minimalsupport}. 
	This result then allows us to deduce the minimum support property of Hermite splines in Corollary \ref{coro:hermite}.
			
		\begin{theorem} \label{theo:minimalsupport}
			Let $\varphi_1, \varphi_2 \in L_2(\R^d)$ be two basis functions.
			We assume that 
			\begin{align}
				\beta^2(t) &= \sum_{k \in \Z} a_k \varphi_1 (t - k)  + b_k \varphi_2 (t - k),  \label{eq:beta2}\\
				\beta^3(t) &= \sum_{k \in \Z} c_k \varphi_1 (t - k)  + d_k \varphi_2 (t - k), \label{eq:beta3}
			\end{align}			
			with reproduction sequences $a,b,c,d$ satisfying
			\begin{align} \label{eq:conditionsequences}
				\sum_{k \in \Z} k^3 ( |a_k| + |b_k| + |c_k| + |d_k|) < \infty.
			\end{align}
			In particular, the quadratic and cubic B-splines $\beta^2,\beta^3$ are in $V(\varphi_1,\varphi_2)$.
			Then, we have that
			\begin{align}\label{eq:sizesupports}
			|\mathrm{Supp} \varphi_1 |
			+
				|\mathrm{Supp} \varphi_2 | \geq 4
			\end{align}
		\end{theorem}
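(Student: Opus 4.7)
My plan is a restriction-and-counting argument combined with an averaging step to handle possible misalignment between the supports of $\varphi_1, \varphi_2$ and the integer lattice.

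If $\varphi_1$ or $\varphi_2$ has non-compact support then $|\mathrm{Supp}\,\varphi_1| + |\mathrm{Supp}\,\varphi_2| = \infty$ and there is nothing to prove, so I assume both are compactly supported and write $T_i = |\mathrm{Supp}\,\varphi_i|$. For a non-integer $a \in \R \setminus \Z$, the open unit interval $(a, a+1)$ contains exactly one integer knot $k_0$. The hypotheses \eqref{eq:beta2} and \eqref{eq:beta3}, together with the shift-invariance of $V(\varphi_1, \varphi_2)$, guarantee that this space contains every integer shift of both $\beta^2$ and $\beta^3$. A standard Marsden-type analysis then shows that the restrictions of these shifts to $(a, a+1)$ span the whole space of piecewise cubic functions on $(a, k_0)\cup(k_0, a+1)$ that are $C^1$ at $k_0$, a space of dimension $4 + 4 - 2 = 6$. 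Hence $\dim V(\varphi_1, \varphi_2)|_{(a, a+1)} \geq 6$.

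For the matching upper bound, the restriction to $(a, a+1)$ is linearly spanned by those shifts $\varphi_i(\cdot - k)$ whose support meets $(a, a+1)$. Letting $N_i(a)$ denote their number, one has $\dim V(\varphi_1,\varphi_2)|_{(a, a+1)} \leq N_1(a) + N_2(a)$. A direct Fubini computation gives
\[
\int_0^1 N_i(a)\,\mathrm{d}a \;=\; \bigl|\mathrm{Supp}\,\varphi_i + [0, 1]\bigr| \;\leq\; T_i + 1,
\]
the inequality being valid because $\mathrm{Supp}\,\varphi_i$ is contained in an interval of length $T_i$. Integrating the pointwise bound $6 \leq N_1(a) + N_2(a)$ over $a \in [0,1]$ then yields
\[
6 \;\leq\; \int_0^1 \bigl(N_1(a) + N_2(a)\bigr)\,\mathrm{d}a \;\leq\; (T_1 + 1) + (T_2 + 1) \;=\; T_1 + T_2 + 2,
\]
from which $T_1 + T_2 \geq 4$ follows.

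The main obstacle is to pin down carefully the lower bound $\dim V(\varphi_1, \varphi_2)|_{(a, a+1)} \geq 6$: one must check that the restrictions to $(a, a+1)$ of the integer shifts of $\beta^2$ and $\beta^3$ really do fill the full $6$-dimensional space of piecewise cubic $C^1$ functions on the two sub-intervals, and not merely a proper subspace. This reduces to a finite-dimensional linear algebra computation using the explicit polynomial pieces of $\beta^2, \beta^3$ together with the classical fact that shifts of $\beta^3$ alone already reproduce all polynomials of degree at most $3$ on any bounded interval, while shifts of $\beta^2$ complement them with the extra $C^1$-but-not-$C^2$ degrees of freedom at the knot $k_0$.
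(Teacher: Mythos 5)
Your route is genuinely different from the paper's (which works in the Fourier domain: it inverts the $2\times 2$ system \eqref{eq:AB}--\eqref{eq:CD}, shows that $\varphi_1^{(4)}$ and $\varphi_2^{(4)}$ are finite sums of shifted Diracs and Dirac derivatives at the integers, and then kills the size-one case by Taylor expansion). Your lower bound is sound: on a window $(a,a+1)$ with one interior knot $k_0$, the shifts of $\beta^3$ span the $5$-dimensional space of $C^2$ piecewise cubics, the shifts of $\beta^2$ span the $4$-dimensional space of $C^1$ piecewise quadratics, their intersection is the $3$-dimensional space of quadratic polynomials, so together they span the full $6$-dimensional space of $C^1$ piecewise cubics; local linear independence of B-splines makes this a routine check. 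Note also that your argument never uses \eqref{eq:conditionsequences}, whereas the paper's Fourier argument needs it to get a $C^3$ symbol.

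The genuine gap is in the averaging step. The Fubini identity gives $\int_0^1 N_i(a)\,\mathrm{d}a = \lvert \mathrm{Supp}\,\varphi_i + (0,1)\rvert$, but your justification of $\lvert \mathrm{Supp}\,\varphi_i + (0,1)\rvert \leq T_i+1$ --- ``because $\mathrm{Supp}\,\varphi_i$ is contained in an interval of length $T_i$'' --- is false when $T_i$ denotes the Lebesgue measure of the support, which is what $\lvert \mathrm{Supp}\,\varphi_i\rvert$ in \eqref{eq:sizesupports} means: a disconnected support of measure $T_i$ need not fit in an interval of length $T_i$, and the Minkowski sum picks up an extra $+1$ for each gap of length $\geq 1$. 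Concretely, $\mathrm{Supp}\,\varphi_1=[0,1]\cup[2,3]$ and $\mathrm{Supp}\,\varphi_2=[0,1]$ give total support measure $3<4$ yet $\int_0^1\bigl(N_1(a)+N_2(a)\bigr)\,\mathrm{d}a = 4+2 = 6$, so your inequality chain cannot exclude this configuration; the counting argument alone is therefore insufficient for the measure version of the statement. What you do prove is the weaker bound $\bigl(\sup\mathrm{Supp}\,\varphi_1-\inf\mathrm{Supp}\,\varphi_1\bigr)+\bigl(\sup\mathrm{Supp}\,\varphi_2-\inf\mathrm{Supp}\,\varphi_2\bigr)\geq 4$, i.e.\ the result for the lengths of the smallest enclosing intervals. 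Closing the gap requires structural information on the supports that the averaging does not provide --- this is exactly what the paper's Step 1 (integer knots, so supports are unions of unit integer intervals) combined with its Step 2 (a function supported on a single unit interval whose fourth derivative is $y_0\delta+y_1\delta(\cdot-1)+z_0\delta'+z_1\delta'(\cdot-1)$ must vanish) supplies; you would need an analogous argument ruling out small disconnected supports, e.g.\ by running your dimension count on a window placed inside a gap of the support.
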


\begin{proof}
		First of all, one can restrict to compactly supported basis functions $\varphi_1$ and $\varphi_2$ (otherwise, $|\mathrm{Supp} \varphi_1 | +	|\mathrm{Supp} \varphi_2 | = \infty$). 
		If one of the basis function, for instance $\varphi_2$, is identically zero, then the cubic spline $\beta^3 \in V(\varphi_1)$. 
		This means in particular that the basis function $\varphi_1$ reproduces polynomials up to degree 3, and its support is therefore at least of size four~\cite[Theorem 1]{Blu2001}.
		Hence, we again have that $|\mathrm{Supp} \varphi_1 | +	|\mathrm{Supp} \varphi_2 | = |\mathrm{Supp} \varphi_1 | \geq 4$. 
		We now assume that $\varphi_1$ and $\varphi_2$ are not identically $0$. \\
		
		\textit{Step 1.} We show that the  extreme points of the supports of $\varphi_1$ and $\varphi_2$ are integers.
		For $x=a,b,c,d$, we set $X(\omega) = \sum_{k\in \Z} x_k \mathrm{e}^{-\mathrm{j} \omega k}$, the $2\pi$-periodic Fourier transform of $x$. Condition \eqref{eq:conditionsequences} ensures that $X$ has a continuous third derivative. In Fourier domain, \eqref{eq:beta2} and \eqref{eq:beta3} become
		\begin{align}
			\widehat{\beta^2}(\omega) &= \frac{(1- \mathrm{e}^{- \mathrm{j}\omega})^3}{(\mathrm{j}\omega)^3} = A(\omega) \widehat{\varphi_1}(\omega) + B(\omega) \widehat{\varphi_2}(\omega),
			\label{eq:AB} \\
			\widehat{\beta^3}(\omega) &= \frac{(1- \mathrm{e}^{- \mathrm{j}\omega})^4}{(\mathrm{j}\omega)^4} = C(\omega) \widehat{\varphi_1}(\omega) + D(\omega) \widehat{\varphi_2}(\omega).
			\label{eq:CD}
		\end{align}
		We set $\mathrm{det}(\omega) = A(\omega) D(\omega) - B(\omega)C(\omega)$, which is itself a function with continuous third derivative.
		From \eqref{eq:AB} and \eqref{eq:CD}, we obtain
		\begin{align}
			\mathrm{det}(\omega) \widehat{\varphi}_1(\omega)& = D(\omega) \frac{(1- \mathrm{e}^{- \mathrm{j}\omega})^3}{(\mathrm{j}\omega)^3} - B(\omega)  \frac{(1- \mathrm{e}^{- \mathrm{j}\omega})^4}{(\mathrm{j}\omega)^4}, \label{eq:DB}\\
						\mathrm{det}(\omega) \widehat{\varphi}_2(\omega)& = -C (\omega) \frac{(1- \mathrm{e}^{- \mathrm{j}\omega})^3}{(\mathrm{j}\omega)^3} + A (\omega)  \frac{(1- \mathrm{e}^{- \mathrm{j}\omega})^4}{(\mathrm{j}\omega)^4}  . \label{eq:CA}
		\end{align}
		We show that $\mathrm{det}(\omega) \neq 0$ for $\omega \neq 0$ by contradiction. Let us fix $\omega_0 \in (0,2\pi)$ and assume that $\mathrm{det}(\omega_0) = 0$. We set $\alpha = \frac{1- \mathrm{e}^{-\mathrm{j} \omega_0}}{\mathrm{j}\omega_0}$ and $\beta = \frac{1- \mathrm{e}^{-\mathrm{j} \omega_0}}{\mathrm{j}\omega_0+2\pi}$. Then, $\alpha\neq 0$, $\beta\neq 0$, and $\alpha \neq \beta$. Moreover, \eqref{eq:DB} for $\omega = \omega_0$ and $(\omega_0+2\pi)$ implies that 
		\begin{align}
			\begin{pmatrix} 
\alpha^3 & -\alpha^4 \\
\beta^3 & -\beta^4 
\end{pmatrix}
 \begin{pmatrix} 
D(\omega_0) \\
B(\omega_0)
\end{pmatrix} 
= \begin{pmatrix} 
0\\
0
\end{pmatrix}.
		\end{align}
	The matrix being invertible (with determinant $\alpha^3\beta^3(\alpha-\beta) \neq 0$), we deduce that $D(\omega_0)= B(\omega_0)=0$. Similary, \eqref{eq:CA} with  $\omega = \omega_0$ and $(\omega_0+2\pi)$ implies that $A(\omega_0) = C(\omega_0) = 0$.  Injecting this in \eqref{eq:AB} with $\omega = \omega_0$, we deduce that $\widehat{\beta}_2(\omega_0) = \alpha^3 = 0$, which contradicts our initial assumption.
	
	We now study $\mathrm{det}(\omega)$ around the origin.
	Assume that $\mathrm{det}^{(p)}(0) = 0$ for $p = 0,1,2,3$.
	We will show that this is impossible.
	Under our assumption, Taylor's expansion gives that $\mathrm{det} (\omega) = o(\omega^3)$ around $0$. Hence, the function $\frac{\mathrm{det} (\omega) }{(1- \mathrm{e}^{- \mathrm{j} \omega})^3}$ can be continuously extended at $\omega = 0$, for which it is $0$. By periodicity, this is also valid for $\omega = 2 \pi$.
	In particular, $\frac{\mathrm{det} (\omega) }{(1- \mathrm{e}^{- \mathrm{j} \omega})^3} \widehat{\varphi}_1 (\omega)$ is continuous with limit $0$ at $\omega = 0$ and $2\pi$.
	From \eqref{eq:DB}, we moreover remark that
	\begin{align}
	\frac{\mathrm{det} (\omega) }{(1- \mathrm{e}^{- \mathrm{j} \omega})^3} \widehat{\varphi}_1 (\omega)
	=
	D(\omega) - B(\omega) \frac{1 - \mathrm{e}^{- \mathrm{j} \omega}}{\mathrm{j}\omega}.
	\end{align}
	Knowing the limit of the left term at $\omega = 0$ and $2\pi$, and using the $2\pi$-periodicity of $D(\omega)$, one deduces that $D(0) - B(0) = 0 = D(2\pi) - 0 = D(0)$. Therefore, $D(0) = B(0) = 0$. An identical reasoning based on \eqref{eq:CA} implies that $A(0) =C(0) = 0$. From \eqref{eq:AB} with $\omega = 0$, we obtain $\widehat{\beta}_3(0)=0$, which is false. 
	As a consequence, $\mathrm{det}^{(p)}(0) \neq 0$ for some $0\leq p \leq 3$, imposing that $\frac{(1- \mathrm{e}^{\mathrm{j} \omega})^3}{\mathrm{det}(\omega)}$ has a limit (possibly $0$) at $0$. Combined with the fact that $\mathrm{det}$ does not vanish over $(0,2\pi)$, we deduce that $F(\omega) =\frac{(1- \mathrm{e}^{\mathrm{j} \omega})^3}{\mathrm{det}(\omega)}$ is a continuous $2\pi$-periodic function.
	We can then rewrite \eqref{eq:DB}  as 
	\begin{align}
		(\mathrm{j}\omega)^4 \widehat{\varphi}_1(\omega) = (\mathrm{j}\omega) F(\omega) D(\omega) - (1-\mathrm{e}^{-\mathrm{j}\omega}) F(\omega) B(\omega). 
	\end{align}
	The functions $F(\omega)D(\omega)$ and $(1-\mathrm{e}^{-\mathrm{j}\omega}) F(\omega) B(\omega)$ are $2\pi$-periodic, hence their inverse Fourier transforms are sum of Dirac impulses located at the integers. It means in particular that we have, in time domain,
	\begin{align} \label{eq:varphi14} 
	\varphi_1^{(4)}(t) = \sum_{k\in\Z}  \left( y_k \delta(t-k) + z_k \delta'(t-k) \right).
	\end{align}
	Since $\varphi_1^{(4)}$ is compactly supported, like $\varphi_1$, only finitely many $y_k$ and $z_k$ are non-zero. Then, $\varphi_1$ is a compactly supported function whose fourth derivative has a support with integer extreme points (due to \eqref{eq:varphi14}), and therefore has a support with integer extreme points. The same reasoning also applies for $\varphi_2$, concluding this part of the proof. \\
	
	\textit{Step 2.} We know that the supports of $\varphi_1$ and $\varphi_2$ are of the form $[a,b]$ with $a<b$, $a,b\in\Z$.
	We assume that $|\mathrm{Supp} \varphi_1 |
			+
				|\mathrm{Supp} \varphi_2 | < 4$. Then, one of the two basis functions has a support of size one, for instance $\varphi_1$. We also assume without loss of generality that $\mathrm{Supp} \varphi_1 = [0,1]$, implying that only $y_0,y_1,z_0,z_1$ are possibly non-zero in \eqref{eq:varphi14}. Going back to the Fourier domain, one obtains
				\begin{align} \label{eq:beforeTaylor}
					(\mathrm{j}\omega)^4 \widehat{\varphi}_1(\omega) = y_0 + y_1 \mathrm{e}^{- \mathrm{j}\omega} +  \mathrm{j}\omega ( z_0 + z_1 \mathrm{e}^{- \mathrm{j}\omega}).
				\end{align}
				The function $\varphi_1$ is compactly supported. Its Fourier transform is hence infinitely smooth, and we can do the Taylor expansion of both sides in \eqref{eq:beforeTaylor}.
				In particular, we obtain the relations 
				\begin{align}
					y_0 + y_1 = z_0 + z_1 + y_1 = \frac{y_1}{2} + z_1 =  \frac{y_1}{6}  + \frac{z_1}{2} = 0.
				\end{align}
				This imposes that $y_0 = y_1 = z_0 = z_1 = 0$, which is absurd. Finally, it shows that $|\mathrm{Supp} \varphi_1 |+|\mathrm{Supp} \varphi_2 | \geq 4$, as expected. 				
\end{proof}
		
	Condition \eqref{eq:conditionsequences} plays an important role in our proof by imposing some regularity in Fourier domain.
	In practice, one even expects that compactly supported basis functions can generate the B-splines $\beta^2$ and $\beta^3$ with finitely many coefficients, in which case \eqref{eq:conditionsequences} is automatically satisfied.
	However, we believe that the condition \eqref{eq:conditionsequences} can be relaxed up to some extend.
	From Theorem \ref{theo:minimalsupport}, we easily deduce that Hermite splines have the minimum support property.
	
	\begin{corollary} \label{coro:hermite}
		The Hermite splines $(\phi_1,\phi_2)$ are of minimal support among the pair of functions that are able to reproduce both quadratic and cubic B-splines with reproduction sequences satisfying \eqref{eq:conditionsequences}. 
	\end{corollary}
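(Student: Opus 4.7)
The plan is essentially to read off the corollary as a direct application of Theorem \ref{theo:minimalsupport} combined with two elementary verifications about the Hermite generators themselves. First I would invoke Theorem \ref{theo:minimalsupport}: for \emph{any} admissible pair $(\varphi_1,\varphi_2)$ reproducing $\beta^2$ and $\beta^3$ with reproduction sequences satisfying \eqref{eq:conditionsequences}, the lower bound
\begin{align*}
|\mathrm{Supp}\,\varphi_1| + |\mathrm{Supp}\,\varphi_2| \geq 4
\end{align*}
holds. This converts the whole question into checking that the Hermite pair $(\phi_1,\phi_2)$ (i) is itself admissible under the corollary's hypotheses, and (ii) saturates the bound.

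For (ii), the formulas \eqref{eq:phi1}--\eqref{eq:phi2} give $\mathrm{Supp}\,\phi_1 = \mathrm{Supp}\,\phi_2 = [-1,1]$, so $|\mathrm{Supp}\,\phi_1| + |\mathrm{Supp}\,\phi_2| = 2+2 = 4$, exactly matching the lower bound. For (i), one must exhibit reproduction sequences $a,b,c,d$ such that $\beta^2$ and $\beta^3$ decompose as in \eqref{eq:beta2}--\eqref{eq:beta3} and such that \eqref{eq:conditionsequences} is satisfied. These sequences are already known from \cite{Uhlmann2016}, and in fact they are finitely supported because both $\beta^2$ and $\beta^3$ are compactly supported piecewise polynomials that fit exactly into a small window of Hermite knots. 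Any finitely supported sequence trivially satisfies $\sum_k |k|^3(|a_k|+|b_k|+|c_k|+|d_k|)<\infty$, so condition \eqref{eq:conditionsequences} is immediate.

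Combining these two points yields the optimality statement. There is no real obstacle here; the substantive work has been done in Theorem \ref{theo:minimalsupport}, and the corollary is merely the observation that the general lower bound is attained by the explicit Hermite generators. The only point that deserves a sentence of justification, rather than a citation, is the explicit form of the reproduction sequences of $\beta^2$ and $\beta^3$ in $V(\phi_1,\phi_2)$, which one can either quote from \cite{Uhlmann2016} or derive in one line from the Hermite interpolation identities \eqref{eq:interpcond} evaluated at $f_{\mathrm{Her}} = \beta^2$ and $f_{\mathrm{Her}} = \beta^3$, reading off $c[k] = \beta^j(k)$ and $d[k] = (\beta^j)'(k)$ for $j=2,3$.
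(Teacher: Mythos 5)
Your proposal is correct and follows essentially the same route as the paper's own proof: invoke Theorem~\ref{theo:minimalsupport} for the lower bound, note that the Hermite reproduction sequences for $\beta^2,\beta^3$ (cited from \cite{Uhlmann2016}) are finitely supported and hence satisfy \eqref{eq:conditionsequences}, and observe that $|\mathrm{Supp}\,\phi_1|+|\mathrm{Supp}\,\phi_2|=2+2=4$ attains the bound. Your optional one-line derivation of the sequences as $c[k]=\beta^j(k)$, $d[k]=(\beta^j)'(k)$ via the interpolation identities \eqref{eq:interpcond} is a valid and pleasant complement to the citation, but it does not change the argument.
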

	
	\begin{proof}
	From~\cite[Appendix A]{Uhlmann2016}, we know that Hermite splines can reproduce both quadratic and cubic B-splines, hence
	$\beta^2$ and $\beta^3$ are in $V(\phi_1,\phi_2)$ with compactly supported reproduction sequences obviously satisfying \eqref{eq:conditionsequences}. The supports of $\phi_1$ and $\phi_2$ are of size two, implying that $|\mathrm{Supp} \phi_1 | + |\mathrm{Supp} \phi_2 | = 4.$	Finally, the pair $(\phi_1,\phi_2)$ is maximally localized due to \eqref{eq:sizesupports}. 
	\end{proof}
	
	It is worth noting that the supports of the pair of Hermite splines jointly has the same size as the B-spline $\beta^3$. However, $\beta^3$ alone has less reproduction property. Being of class $C^2$, it is in particular unable to reproduce the quadratic spline $\beta^2$, which only has $C^1$ transitions at the integers. The simplest way of reproducing $\beta^2, \beta^3$ is to consider the basis pair $(\beta^2, \beta^3)$ itself, which is not maximally localized since the sum of the supports is $7$.
	An important additional remark is that two functions are needed to reproduce both cubic and quadratic spline, as formalised in Proposition \ref{prop:lesdeuxfontlapaires}.
	
	\begin{proposition} \label{prop:lesdeuxfontlapaires}
		There exists no single function $\varphi \in L_2(\R)$ that is able to reproduce $\beta^2$ and $\beta^3$ with summable reproduction sequences. 
	\end{proposition}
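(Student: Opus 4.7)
I would argue by contradiction in the Fourier domain, leveraging the clash between the $2\pi$-periodicity of reproduction symbols and the non-periodicity of the factor $\mathrm{j}\omega$. Suppose a single generator $\varphi \in L_2(\R)$ reproduces both $\beta^2$ and $\beta^3$ with summable coefficient sequences $(a_k),(c_k) \in \ell_1(\Z)$. Absolute summability ensures that the symbols $A(\omega) = \sum_{k} a_k \mathrm{e}^{-\mathrm{j}\omega k}$ and $C(\omega) = \sum_{k} c_k \mathrm{e}^{-\mathrm{j}\omega k}$ are continuous $2\pi$-periodic functions on $\R$. Exactly as in \eqref{eq:AB}--\eqref{eq:CD}, the reproduction identities translate into
\begin{equation*}
\frac{(1-\mathrm{e}^{-\mathrm{j}\omega})^3}{(\mathrm{j}\omega)^3} = A(\omega)\,\widehat{\varphi}(\omega), \qquad \frac{(1-\mathrm{e}^{-\mathrm{j}\omega})^4}{(\mathrm{j}\omega)^4} = C(\omega)\,\widehat{\varphi}(\omega).
\end{equation*}

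The next step is to eliminate $\widehat{\varphi}$. Cross-multiplying the two identities produces
\begin{equation*}
A(\omega)\,(1-\mathrm{e}^{-\mathrm{j}\omega})^4 = \mathrm{j}\omega\, C(\omega)\,(1-\mathrm{e}^{-\mathrm{j}\omega})^3,
\end{equation*}
and dividing out the nonzero factor $(1-\mathrm{e}^{-\mathrm{j}\omega})^3$ on $\R\setminus 2\pi\Z$ yields
\begin{equation*}
A(\omega)\,(1-\mathrm{e}^{-\mathrm{j}\omega}) = \mathrm{j}\omega\, C(\omega),
\end{equation*}
an identity that then extends to all of $\R$ by continuity of both sides. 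This is the key relation: it couples two $2\pi$-periodic symbols through a non-periodic multiplier.

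Finally, I would exploit this asymmetry directly. Evaluating the identity at $\omega + 2\pi$ and invoking the periodicity $A(\omega+2\pi) = A(\omega)$, $C(\omega+2\pi) = C(\omega)$, $\mathrm{e}^{-\mathrm{j}(\omega + 2\pi)} = \mathrm{e}^{-\mathrm{j}\omega}$ gives $A(\omega)(1-\mathrm{e}^{-\mathrm{j}\omega}) = \mathrm{j}(\omega + 2\pi)\,C(\omega)$. Subtracting from the original relation leaves $2\pi \mathrm{j}\,C(\omega) = 0$ for every $\omega \in \R$, hence $C \equiv 0$. This is incompatible with $\widehat{\beta^3}(0) = 1$, which closes the contradiction. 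The only mildly delicate point is justifying the extension of the simplified identity across the discrete set $2\pi\Z$, but this follows at once from the continuity of $A$ and $C$ and the smoothness of the pre-division expression.
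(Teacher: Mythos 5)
Your proposal is correct and follows essentially the same route as the paper: both work in the Fourier domain with the identities $\widehat{\beta^2}=A\widehat{\varphi}$, $\widehat{\beta^3}=C\widehat{\varphi}$ and derive a contradiction from the clash between the $2\pi$-periodicity of the symbols and the non-periodic factor $\mathrm{j}\omega$ compared at $\omega$ and $\omega+2\pi$. The only (minor) difference is that you eliminate $\widehat{\varphi}$ by cross-multiplication and conclude $C\equiv 0$ globally, whereas the paper divides through at a fixed $\omega_0\in(0,2\pi)$ (where the nonvanishing of $A$, $B$, $\widehat{\varphi}$ is forced) and reads off the contradiction pointwise.
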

		
	\begin{proof} 
	By contradiction, let us assume that there exists $\varphi$ such that $\beta^2 = \sum_{k \in \Z} a_k \varphi( \cdot - k)$ and $\beta^3 = \sum_{k \in \Z} b_k \varphi( \cdot - k)$
	with $a,b \in \ell_1(\Z)$. Then, the Fourier transforms $A(\mathrm{e}^{\mathrm{j} \omega})$ and $B(\mathrm{e}^{\mathrm{j} \omega})$ are continous $2\pi$-periodic functions. In Fourier domain, we have that
	\begin{align}
		\left( \frac{1- \mathrm{e}^{-\mathrm{j}\omega}}{\mathrm{j}\omega}\right)^3 = A(\mathrm{e}^{\mathrm{j} \omega}) \widehat{\varphi}(\omega), \quad 
		\left( \frac{1- \mathrm{e}^{-\mathrm{j}\omega}}{\mathrm{j}\omega}\right)^4 = B(\mathrm{e}^{\mathrm{j} \omega}) \widehat{\varphi}(\omega).  \label{eq:Bphi}
	\end{align}
	Set $\omega_0 \in (0,2\pi)$ and $\omega_1 = \omega_0 + 2 \pi$. The relation \eqref{eq:Bphi} imposes that $A(\mathrm{e}^{\mathrm{j} \omega_i}) $, $B(\mathrm{e}^{\mathrm{j} \omega_i}) $, and $\widehat{\varphi}(\omega_i)$ are non-zero for $i=1,2$, and 
	\begin{equation}
	 \frac{1- \mathrm{e}^{-\mathrm{j}\omega_i}}{\mathrm{j}\omega_i} A(\mathrm{e}^{\mathrm{j} \omega_i}) \widehat{\varphi}(\omega_i) = B(\mathrm{e}^{\mathrm{j} \omega_i} ) \widehat{\varphi}(\omega_i).
	\end{equation}
	After simplifications, we deduce that 
	\begin{equation} \label{eq:strangeequality}
	\mathrm{j} \omega_i = \frac{(1- \mathrm{e}^{-\mathrm{j}\omega_i}) A(\mathrm{e}^{\mathrm{j} \omega_i}) }{B(\mathrm{e}^{\mathrm{j} \omega_i}) }.
	\end{equation}
	The right term in \eqref{eq:strangeequality} is equal for $\omega_0$ and $\omega_1$ by periodicity, while the left term is different. This contradicts our initial assumption, and implies Proposition \ref{prop:lesdeuxfontlapaires}. 
	\end{proof}

   \section{Approximation Properties of Hermite Splines} \label{sec:approximation}
Existing approaches have been proposed to characterize the behaviour of the approximation error for single~\cite{Blu1999a} and multi-generators~\cite{Blu1999b}. They however assume technical conditions that Hermite interpolation does not satisfy. 
We therefore formulate new theoretical tools to quantify the asymptotic constant of the approximation error of this scheme. Since functions are estimated with derivative samples in the Hermite case, we also propose to investigate the approximation error on the first derivative.
In our setting, other existing approximation schemes can be considered as well, allowing us to put the excellent approximation properties of Hermite splines in perspective with other related schemes, such as cubic B-splines and interlaced derivative sampling. 

\subsection{Generalized Sampling and Approximation Error}
The approximation of a  continuously-defined signal from a collection of its samples in a generalized sampling scheme relies on two ingredients: some basis functions $\bm{\varphi} = (\varphi_1, \ldots , \varphi_{N}) \in \left(L_2(\R)\right)^{N}$, and some \emph{sampling functions} $\bm{\tupvarphi} = (\tvarphi_1, \ldots , \tvarphi_{N})$ that are rapidly decaying generalized functions, including rapidly decaying $L_2$-integrable functions\footnote{We say that an $L_2$ function is \emph{rapidly decaying} if it decays faster than any polynomial at infinity. A generalized function in $\mathcal{S}'(\R)$ is rapidly decaying if its convolution with any infinitely differentiable and rapidly decaying function is a rapidly decaying function \cite{Schwartz1966distributions}. In particular, a rapidly decaying (generalized) function has an infinitely differentiable Fourier transform, which we shall rely on thereafter.} together with the Dirac impulse $\delta$, its derivative, and their shifts.
The term \emph{generalized} is motivated by the fact that sampling functions allow accessing more than the values of the signal at the sampling points~\cite{Papoulis1977}. The set of pairs of basis and sampling functions fully characterize an approximation scheme. Moreover, because we aim at a fair comparison, the quantity of information per unit of time should be equal among the considered schemes. This requires to slightly adapt the definition of $V(\bm{\varphi})$ given in~\eqref{eq:outspace}. From now, we shall consider
		\begin{align}\label{eq:outspace2}
		W(\bm{\varphi})&=\left\{ \sum_{i=1}^{N}\sum_{k\in\mathbb{Z}} c_i[k]\varphi_i( \cdot- Nk) : \mathbf{c} = (c_1, \ldots , c_N) \in (\ell_2(\mathbb{Z}))^{N} \right\},
		\end{align}
so that, for any $N \geq 1$, there is on average a single degree of freedom on each interval of size one.
The sampling and reconstruction problem is then formally defined as follows. The function $f$
is reconstructed by its approximation $\tilde{f}$ associated to the basis functions $\bm{\varphi}$ and for the sampling fuctions $\tilde{\bm{\varphi}}$, defined as
\begin{align}\label{eq:Qapprox}
\tilde{f}&= \sum_{i=1}^{N}\sum_{k\in\mathbb{Z}} \langle f, \tvarphi_i(\cdot-Nk) \rangle \varphi_i(\cdot-Nk) \in W(\bm{\varphi}).
\end{align}
We denote by $\mathcal{Q}_{\bm{\varphi}}^{\bm{\tilde{\varphi}}}$ the operator such that $\mathcal{Q}_{\bm{\varphi}}^{\bm{\tilde{\varphi}}} f = \tilde{f}$.
Hermite spline approximation thus corresponds to $N=2$ with basis functions $\varphi_1(t)=\phi_1(\frac{t}{2})$ and $\varphi_2(t)=2\phi_2(\frac{t}{2})$, and sampling functions $\tilde{\varphi}_1(t) = \delta(t)$ and $\tilde{\varphi}_2(t) = - \delta'(t)$.

The best approximation of a given scheme is obtained when the pairs of sampling and basis functions are properly chosen such that $\tilde{f}$ is the orthogonal projection of $f$ onto $W(\bm{\varphi})$. This implies imposing a particular condition~\cite{Blu1999b} on the sampling functions $\bm{\tupvarphi}$, namely that
\begin{align}\label{eq:phid}
	\bm{\tupvarphi} = \begin{pmatrix} \varphi_{1,d} \\ \vdots \\ \varphi_{N,d} \end{pmatrix}= \mathcal{F}^{-1}\{\mathbf{G}_{\bm{\varphi}}(\cdot)^{-1} \widehat{\bm{\varphi}}\},
\end{align}
where $\mathbf{G}_{\bm{\varphi}}$ is the Gram matrix of size $N\times N$ associated to $\bm{\varphi}$, given for $\omega \in \R$ by
\begin{align}\label{eq:gram}
	\mathbf{G}_{\bm{\varphi}} (\omega) 
	&=
	\sum_{k\in\Z} \widehat{\bm\varphi} (\omega +2k\pi)  \widehat{\bm\varphi}^{*T} (\omega +2k\pi) .
\end{align} 
This particular collection of sampling functions are called the \emph{dual functions} associated to $\bm{\varphi}$ and are denoted by $\bm{\varphi}_d$. 
Note that $\mathbf{G}_{\bm{\varphi}}(\omega)$ is invertible for every $\omega \in \R$ --- and therefore \eqref{eq:gram} is meaningful --- because \eqref{eq:Rieszcondition} is equivalent to $0 < A \leq \lambda_{\min}(\omega) \leq \lambda_{\max}(\omega)  \leq B < \infty$, where $\lambda_{\min}(\omega)$ ($\lambda_{\max}(\omega) $, respectively)  is the minimum (maximum, respectively) eigenvalue of $\mathbf{G}_{\bm{\varphi}}(\omega)$ \cite{Aldroubi1996oblique}.
When $\bm{\tupvarphi}$ is defined as~\eqref{eq:phid}, the operator $\mathcal{Q}_{\bm{\varphi}}^{\tilde{\bm{\varphi}}}$ is the orthogonal projector over $W(\bm{\varphi})$, and is denoted as $\mathcal{P}_{\bm{\varphi}} = \mathcal{Q}_{\bm{\varphi}}^{\bm{\varphi}_d}$. In this situation,~\eqref{eq:Qapprox} is reformulated as
	\begin{align} \label{eq:P}
	\tilde{f}(t)=\mathcal{P}_{\bm{\varphi}} f = \sum_{i=1}^{N} \sum_{k\in \Z} \langle f, \varphi_{i,d} \left( \cdot- Nk\right) \rangle\varphi_i\left(\cdot-Nk\right).
	\end{align} 

To simplify the notation, we shall write $\mathcal{P}= \mathcal{P}_{\bm{\varphi}}$ and $\mathcal{Q} = \mathcal{Q}_{\bm{\varphi}}^{\tilde{\bm{\varphi}}}$ thereafter. 
The quality of the approximation is then evaluated in terms of approximation error which is expressed as
\begin{align}
E_{\bm\varphi}^{\bm\tupvarphi}(f) =  \lVert f - \tilde{f}\rVert_{L_2} = \lVert f - \mathcal{Q} f \rVert_{L_2}.
\end{align}
When $\tilde{f}=\mathcal{P} f$, the error is denoted as $E_{\bm\varphi}(f)$. A direct implication is that $E_{\bm{\varphi}}(f) = E_{\bm{\varphi}}^{\bm{\varphi}_d} (f) \leq E_{\bm\varphi}^{\bm\tupvarphi}(f)$, reaching the equality if and only if $\bm\tupvarphi=\bm{\varphi}_d$.

Up to now, we considered approximation schemes with (generalized) samples taken on the integer grid, corresponding to a sampling step size $T=1$. This parameter affects the coarseness of the approximation: when $T\rightarrow 0$, the error is expected to vanish. For $T>0$, the approximation space~\eqref{eq:outspace2} becomes
	\begin{align}\label{eq:VT}
		W_T(\bm{\varphi}) = \left\{ \sum_{i=1}^{N}\sum_{k\in \Z} c_i[k] \varphi_i \left( \frac{\cdot}{T} - Nk \right) \ : \ \mathbf{c} \in (\ell_2(\Z))^{N} \right\}
	\end{align}
and the approximation of $f$ is given by
	\begin{align} \label{eq:QT}
	\tilde{f}_T=\mathcal{Q}_T f = \sum_{i=1}^{N} \sum_{k\in \Z} \left\langle f, \frac{1}{T}\tilde{\varphi}_i \left( \frac{\cdot}{T} - Nk\right) \right\rangle \varphi_i\left(\frac{\cdot}{T}-Nk\right),
	\end{align}
with resulting error
\begin{align}
	E_{\bm\varphi}^{\bm\tupvarphi}(f,T) = \lVert f - \mathcal{Q}_T f \rVert_{L_2}.
	\end{align}
The orthogonal projector~\eqref{eq:P} and its associated error are easily reformulated accordingly. 

A number of hypotheses on the basis functions $\bm{\varphi}$ and the sampling functions $\bm{\tupvarphi}$ have to be met in order to study the errors $E_{\bm{\varphi}}(f,T)$ and $E_{\bm{\varphi}}^{\bm{\tupvarphi}}(f,T)$ in terms of rate of decay and asymptotic constant. The first one is the Riesz basis condition~\eqref{eq:Rieszcondition}, which ensures a unique and stable representation. The second one is the basis functions order. 
\begin{definition} \label{def:orderL}
A family of $N\leq 1$ basis functions $\bm{\varphi} = (\varphi_1,\hdots,\varphi_N)$ is \emph{of order $L$} if they can reproduce polynomial functions up to degree $(L-1)$, meaning that, for $\ell =0,\ldots,(L-1)$, there exists sequences $c_{i,\ell}$ such that
	\begin{align}\label{eq:reprodL}
	t^\ell=\sum_{i=1}^N\sum_{k\in\mathbb{Z}} c_{i,\ell}[k]\varphi_i(t-Nk).
	\end{align}
\end{definition}
This condition is known to be equivalent to the Strang and Fix conditions~\cite{Strang1971}. When it  is met, the decrease of the optimal error $E_{\bm\varphi}(f,T)$ is bounded from above by $T^L$\cite{Blu1999b}. The last important condition is that the sampling and basis functions are quasi-biorthonormal of order $L$.
\begin{definition}  \label{def:bioL}
Two families of basis $\bm\varphi$ and sampling functions $\bm\tupvarphi$ are \emph{quasi-biorthonormal of order $L$} if the basis functions are of order $L$ and, for the dual function $\bm\varphi_d$ given by~\eqref{eq:phid} and all $ \ell=0, \ldots, (L-1)$, we have
		 \begin{align}\label{eq:momentseq}
		\int_{\R} t^\ell \bm\tupvarphi(t)\dint t = \int_{\R} t^\ell \bm\varphi_d(t)\dint t .
	\end{align}
\end{definition}
It is worth noting that \eqref{eq:momentseq} is a slight abuse of notation, since the $\tilde{\varphi}_i$ are not necessarily defined pointwise. However, they are assumed to be rapidly decaying generalized functions and can therefore be taken against a slowly growing smooth functions such as $t\mapsto t^\ell$.
The quasi-biorthonormality ensures that the decrease of the error $E_{\bm\varphi}^{\bm\tupvarphi}(f,T) $ is also bounded by $T^L$\cite{Blu1999b}.
Finally, the rate of decay of the approximation error being under control in all generality, the asymptotic constant can be obtained as 
\begin{align}\label{eq:constantdep}
C_{\bm{\varphi}}^{\bm{\tupvarphi}} (f) = \lim_{T \rightarrow 0} T^{-L} E_{\bm{\varphi}}^{\bm{\tupvarphi}} (f, T),
\end{align}
which, in practice can be computed relying on a Fourier-domain approximation error kernel. For more details, we refer the interested reader to~\cite{Unser1998,Unser1997,Uhlmann2017Thesis} and references therein.

\subsection{Approximation Constants of Irregular Sampling Schemes}
Our goal in this section is to extend the range of applicability of the main results of \cite{Blu1999b} so as to include Hermite spline approximation. 
The original framework is indeed restricted to sampling functions $\tilde{\varphi}$ with bounded Fourier transforms. While this allows considering the case of interpolation, it excludes Hermite spline approximation since the Fourier transform $\mathrm{j}\omega$ of $\delta'$ is unbounded. 

Our development follows the key contributions of \cite{Blu1999b}. We therefore only detail the adaptations that are required in our case. 
We start with a brief summary of the general approach, which is common to many works of approximation theory in shift-invariant spaces.
We first introduce the kernels associated to the functions $\bm{\varphi}, \bm{\tilde{\varphi}}$ as
 \begin{align}
	\mathcal{E}_{\min}(\omega) =&  1 + \widehat{\bm{\varphi}}^{*T}(\omega) \mathbf{G}_{\bm\varphi}^{-1}(\omega)  \widehat{\bm{\varphi}}(\omega),\label{eq:eopt} \\
	\mathcal{E}_{\mathrm{res}} (\omega) =& (\widehat{\bm{\tupvarphi}}-\widehat{\bm{\varphi}_d})^{*T}(\omega) \mathbf{G}_{\bm{\varphi}}(\omega)(\widehat{\bm{\tupvarphi}}-\widehat{\bm{\varphi}_d})(\omega) , \\
	\mathcal{E} (\omega) =& \mathcal{E}_{\min}(\omega) + \mathcal{E}_{\mathrm{res}} (\omega),\label{eq:estd}
\end{align}
where $\mathbf{G}_{\bm{\varphi}}$ is the Gram matrix~\eqref{eq:gram}. The kernel $\mathcal{E}_{\min}$ relates to the \emph{minimum error} case achieved using the orthogonal projector (\textit{i.e.}, $\bm\tupvarphi=\bm\varphi_d$), and $\mathcal{E}_{\mathrm{res}}$ to the \emph{residual error} arising when using sampling functions that differ from the dual functions. We furthermore note that $\mathcal{E}_{\mathrm{res}}(\omega)=0$ when $\bm\tupvarphi=\bm\varphi_d$, as expected.
The key ideas of relying on $\mathcal{E}$ are as follows.
 \begin{itemize}
 	\item The kernel $\mathcal{E}$ measures the approximation power of a reasonable approximation scheme $(\bm{\varphi}, \bm{\tilde{\varphi}})$ in the sense that $\lVert f -  \mathcal{Q}_T f \rVert_{L_2} \approx \left( \int_{\R} \lvert \widehat{f} (\omega) \rvert^2 \mathcal{E}( T\omega) \mathrm{d}\omega \right)^{1/2}$ for small $T>0$.
 	\item The precise behaviour of $\lVert f  - \mathcal{Q}_T f \rVert_{L_2}$ is then deduced from that of $\mathcal{E}$ around the origin. It depends on the approximation order $L$ of the scheme and, typically, behaves like $C T^{L}$. The constant $C$ depends on the function to approximate $f$ and on the scheme $(\bm{\varphi},\bm{\tilde{\varphi}})$ via the Taylor expansion of $\mathcal{E}$. 
 	\end{itemize}
We give a precise meaning to these two points in Propositions \ref{prop:makingthefirstbound} and \ref{prop:taylorarrive} below.
Before that, we recall an important lemma taken from \cite{Blu1999b} that will play a fundamental role in our proofs.

	\begin{lemma} \label{lemma:blunser}
	For $k \geq 0$, we set $\widehat{f}_k (\omega) = \widehat{f}(\omega) 1_{k/T \leq \lvert \omega \rvert < (k+1) / T}$.
 Then,  the following relations hold:
 	\begin{align}
 	&\widehat{f} = \sum_{k \geq 0} \widehat{f}_k;  \label{eq:truciobvious}\\
	&\lVert f_k - \mathcal{Q}_T f_k \rVert_{L_2}^2 = \int_{\R} \lvert f_k(\omega) \rvert^2 \mathcal{E}(T\omega) \mathrm{d}\omega \text{ for } k \geq 0; \text{ and} \label{eq:controlfk}\\
 	& \big\lvert \lVert f - \mathcal{Q}_T f \rVert_{L_2} - \lVert f_0 - \mathcal{Q}_T f_0 \rVert_{L_2} \big\rvert \leq \sum_{k > 0 } \lVert f_k - \mathcal{Q}_T f _k \rVert_{L_2}.  \label{eq:1forprop3}
 	\end{align}
	\end{lemma}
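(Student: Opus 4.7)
Identity \eqref{eq:truciobvious} is immediate: the sets $\{\omega\in\R : k/T \le |\omega| < (k+1)/T\}$ for $k \ge 0$ form a measurable partition of $\R$, so summing the restrictions $\widehat{f}\cdot\mathbf{1}_{\{k/T\le|\omega|<(k+1)/T\}}$ recovers $\widehat{f}$ pointwise, and in $L_2$ by dominated convergence. Likewise \eqref{eq:1forprop3} follows at once from \eqref{eq:truciobvious}: by linearity of $\mathcal{Q}_T$ one has $f - \mathcal{Q}_T f = \sum_{k \ge 0}(f_k - \mathcal{Q}_T f_k)$ in $L_2$, and the reverse triangle inequality yields the stated bound. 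The bulk of the argument is therefore in establishing \eqref{eq:controlfk}.

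For \eqref{eq:controlfk}, my plan is to split the error along the orthogonal decomposition induced by $\mathcal{P}_T$:
\begin{align}
 f_k - \mathcal{Q}_T f_k = (f_k - \mathcal{P}_T f_k) + (\mathcal{P}_T f_k - \mathcal{Q}_T f_k).
\end{align}
The first summand lies in $W_T(\bm{\varphi})^{\perp}$ while the second lies in $W_T(\bm{\varphi})$, so Pythagoras gives $\lVert f_k - \mathcal{Q}_T f_k\rVert_{L_2}^2 = \lVert f_k - \mathcal{P}_T f_k\rVert_{L_2}^2 + \lVert(\mathcal{P}_T - \mathcal{Q}_T) f_k\rVert_{L_2}^2$. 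I would then compute each term in frequency via Parseval: the sampling inner products arising from \eqref{eq:QT} turn, after Parseval, into $2\pi/(NT)$-periodisations (aliasing sums) of integrands involving $\overline{\widehat{\tvarphi}_i(T\cdot)}$ and $\widehat{\varphi}_i(T\cdot)$. Matching these periodised expressions against \eqref{eq:eopt} and the definition of $\mathcal{E}_{\mathrm{res}}$ identifies the two squared norms as $\int_\R |\widehat{f}_k(\omega)|^2 \mathcal{E}_{\min}(T\omega)\dint\omega$ and $\int_\R |\widehat{f}_k(\omega)|^2 \mathcal{E}_{\mathrm{res}}(T\omega)\dint\omega$, respectively; their sum is \eqref{eq:controlfk} in view of \eqref{eq:estd}.

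The main obstacle is the aliasing bookkeeping. The restriction of $\widehat{f}$ to the narrow annuli of width $1/T$ is precisely what makes the periodisation collapse cleanly: on $\Supp \widehat{f}_k$, essentially a single translate by $2\pi m/(NT)$ contributes at each $\omega$, so the cross-terms indexed by distinct $m,m'$ drop out of the quadratic form. Keeping track of the Hermitian conjugate transposes and of the Gram-matrix factor $\mathbf{G}_{\bm{\varphi}}$ together with the dual-function identity \eqref{eq:phid}, so that the algebra lines up exactly with \eqref{eq:eopt}--\eqref{eq:estd}, is where the bulk of the care is required; once the aliased sums are tamed on $\Supp\widehat{f}_k$, the identity follows.
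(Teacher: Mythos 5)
Your plan is correct, but it is worth being clear about what the paper actually does here: its ``proof'' of \eqref{eq:controlfk} and \eqref{eq:1forprop3} is a citation --- it points to equations (27) and (63) in the Blu--Unser multiwavelet paper and verifies only that the hypothesis of (27) (the no-aliasing condition $\widehat{f}(\omega)\widehat{f}(\omega-n/T)=0$) holds for the band-limited pieces $f_k$. What you propose is a self-contained reconstruction of exactly that imported argument: the partition of the frequency axis for \eqref{eq:truciobvious}, linearity of $\mathcal{Q}_T$ plus the (reverse) triangle inequality applied to $f-\mathcal{Q}_Tf=\sum_{k\geq0}(f_k-\mathcal{Q}_Tf_k)$ for \eqref{eq:1forprop3}, and, for \eqref{eq:controlfk}, the Pythagorean split along $f_k-\mathcal{P}_Tf_k\in W_T(\bm{\varphi})^{\perp}$ and $(\mathcal{P}_T-\mathcal{Q}_T)f_k\in W_T(\bm{\varphi})$ followed by Parseval and the collapse of the periodised quadratic forms onto $\mathcal{E}_{\min}$ and $\mathcal{E}_{\mathrm{res}}$. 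This is the standard derivation of the error kernel and is sound; it buys a proof that does not lean on the external reference, at the cost of redoing its bookkeeping. The one point you pass over with the word ``essentially'' is precisely the point the paper also only asserts: that no nonzero multiple of the aliasing period carries $\mathrm{Supp}\,\widehat{f}_k$ onto itself. For a one-sided band of width $1/T$ this is immediate, but $\widehat{f}_k$ is supported on \emph{two} symmetric lobes, so one must also rule out a shift mapping the negative lobe onto the positive one; making that check explicit (it is where the choice of band width relative to the sampling period $NT$ enters) is the only substantive gap between your sketch and a complete proof, and it is a gap shared with the paper's own one-line justification.
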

	
	The equality \eqref{eq:truciobvious} is obvious. 
	The two next relations comes from \cite[Theorem 1]{Blu1999b} and we have simply reformulated the results with our notation. First, it is proven (see (27) in \cite{Blu1999b}) that $\lVert f - \mathcal{Q}_T f\rVert_{L_2} = 	\int_{\R} \lvert f_k(\omega) \rvert^2 \mathcal{E}(T\omega) \mathrm{d}\omega$ as soon as $\widehat{f}(\omega) \widehat{f}(\omega - n/T) = 0$ for any $\omega\in \R$ and $n\in \Z$, a condition that is satisfied by the $f_k$ by construction, giving \eqref{eq:controlfk}.
	Moreover, the inequality \eqref{eq:1forprop3} appears in the proof of \cite[Theorem 1]{Blu1999b} (see (63)).
	
 	\begin{proposition} \label{prop:makingthefirstbound}
 		Let $(\bm{\varphi}, \bm{\tilde{\varphi}})$ be a set of $N$ basis and sampling functions that are biorthonormal and provide an approximation scheme of order $L$. We assume moreover that the kernel $\mathcal{E}$ given by \eqref{eq:estd} satisfies  
 		\begin{align}\label{eq:boundEhyp}
 		\lvert \mathcal{E}(\bm{\omega}) \rvert \leq C^2 \max(1 , \lvert \omega \rvert^{2p})
 		\end{align}
 		 for some $C>0$, some integer $0 \leq p \leq L$, and every $\omega \in \R$. Then, for every $f \in W^{L+1}_2(\R)$, we have
 		\begin{align} \label{eq:firstboundQtf}
 		\lVert f - \mathcal{Q}_T f \rVert_{L_2} = \left( \int_{\R} \lvert \widehat{f} (\omega) \rvert^2 \mathcal{E}(T\omega) \mathrm{d}\omega \right)^{1/2} + O(T^{L+1}).
 		\end{align} 
 	\end{proposition}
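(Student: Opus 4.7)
The plan is to apply Lemma \ref{lemma:blunser}, which splits the problem through the decomposition $\widehat{f} = \sum_{k\geq 0}\widehat{f}_k$ into Fourier shells $\{k/T \leq |\omega| < (k+1)/T\}$. The identity \eqref{eq:controlfk} makes $\|f_0-\mathcal{Q}_T f_0\|_{L_2}^2$ equal to the truncation of the target integral $I := \int_{\R}|\widehat{f}(\omega)|^2\mathcal{E}(T\omega)\dint\omega$ to the central shell $|\omega|<1/T$. The triangle inequality thus reduces the proof to two bounds: the tail estimate (i) $\sum_{k\geq 1}\|f_k-\mathcal{Q}_T f_k\|_{L_2} = O(T^{L+1})$, which controls the left-hand side through \eqref{eq:1forprop3}; and the gap (ii) $|\|f_0-\mathcal{Q}_T f_0\|_{L_2} - I^{1/2}| = O(T^{L+1})$, which matches the partial integral with the full one.

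For (i), I would observe that on the $k$-th shell one has $|T\omega| \in [k,k+1]$, so \eqref{eq:boundEhyp} yields $\mathcal{E}(T\omega) \leq C^2(k+1)^{2p}$, while the lower bound $|\omega|\geq k/T$ allows inserting a weight $|\omega|^{2(L+1)}$ at the cost of a factor $(T/k)^{2(L+1)}$. This produces $\|f_k-\mathcal{Q}_T f_k\|_{L_2}^2 \leq C^2(k+1)^{2p}(T/k)^{2(L+1)}\alpha_k$ with $\alpha_k := \int_{k/T\leq|\omega|<(k+1)/T}|\widehat{f}(\omega)|^2|\omega|^{2(L+1)}\dint\omega$, and Plancherel gives $\sum_k\alpha_k = \|f^{(L+1)}\|_{L_2}^2 < \infty$ because $f \in W_2^{L+1}$. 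Cauchy--Schwarz then yields
\begin{equation*}
\sum_{k\geq 1}\|f_k-\mathcal{Q}_T f_k\|_{L_2} \leq C\, T^{L+1}\,\|f^{(L+1)}\|_{L_2} \left(\sum_{k\geq 1}\frac{(k+1)^{2p}}{k^{2(L+1)}}\right)^{1/2},
\end{equation*}
the last series converging precisely because $2(L+1-p) \geq 2 > 1$ under the hypothesis $p \leq L$.

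For (ii), the gap at the squared level reads $I - \|f_0-\mathcal{Q}_T f_0\|_{L_2}^2 = \int_{|\omega|\geq 1/T}|\widehat{f}(\omega)|^2\mathcal{E}(T\omega)\dint\omega$. Using $\mathcal{E}(T\omega) \leq C^2 T^{2p}|\omega|^{2p}$ together with $|\omega|^{2p} \leq T^{2(L+1-p)}|\omega|^{2(L+1)}$ on this region produces an $O(T^{2(L+1)})$ bound, and the elementary inequality $|a-b| \leq \sqrt{|a^2-b^2|}$ for $a,b\geq 0$ converts this into an $O(T^{L+1})$ bound at the unsquared level. Combining (i) and (ii) by the triangle inequality then closes the proof.

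The main obstacle I anticipate is the borderline case $p = L$, where a naive termwise estimate would yield the non-summable series $\sum_{k\geq 1}(k+1)^L/k^{L+1}$. Invoking Cauchy--Schwarz so as to isolate the convergent factor $\sum_k \alpha_k$---finite precisely because $f \in W_2^{L+1}$---is what saves the argument and recovers the sharp exponent $L+1$ on the right-hand side of \eqref{eq:firstboundQtf}.
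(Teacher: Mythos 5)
Your proposal is correct and follows essentially the same route as the paper: the same shell decomposition via Lemma~\ref{lemma:blunser}, the same split into the tail sum $\sum_{k\geq 1}\lVert f_k-\mathcal{Q}_T f_k\rVert_{L_2}$ controlled by Cauchy--Schwarz against $\lVert f^{(L+1)}\rVert_{L_2}$, and the same $O(T^{2(L+1)})$ estimate of the missing high-frequency part of the integral. The only cosmetic differences are that the paper bounds $\mathcal{E}(T\omega)$ by $C^2T^{2p}\lvert\omega\rvert^{2p}$ rather than $C^2(k+1)^{2p}$ on each shell, and converts the squared gap in (ii) via the reverse triangle inequality in the weighted $L_2$ norm rather than via $\lvert a-b\rvert\leq\sqrt{\lvert a^2-b^2\rvert}$; both yield identical estimates, and your remark about Cauchy--Schwarz rescuing the borderline case $p=L$ is exactly the role it plays in the paper's proof.
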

 	 	
 	 The case $p=0$ corresponds to $\mathcal{E}$ being bounded and can be found in \cite[Theorem 1]{Blu1999b}.
 	\begin{proof}
 	 We set $\widehat{f}_k (\omega) = \widehat{f}(\omega) 1_{k/T \leq \lvert \omega \rvert < (k+1) / T}$. Then, we have 
	\begin{align} \label{eq:inequalityagainandagain}
		\Big\lvert \lVert f  & - \mathcal{Q}_T f \rVert_{L_2}  - \left( \int_{\R} \lvert \widehat{f} (\omega) \rvert^2 \mathcal{E}(T\omega) \mathrm{d}\omega \right)^{1/2}  \Big\rvert  \nonumber \\
	& \leq  
				\left\lvert \lVert f - \mathcal{Q}_T f \rVert_{L_2} - \lVert f_0 - \mathcal{Q}_T f_0 \rVert_{L_2} \right\rvert 	+
			\left\lvert \lVert f_0 - \mathcal{Q}_T f_0 \rVert_{L_2} - \left( \int_{\R} \lvert \widehat{f} (\omega) \rvert^2 \mathcal{E}(T\omega) \mathrm{d}\omega \right)^{1/2}  \right\rvert
			\nonumber \\
		&	\overset{(i)}{\leq} 
			\sum_{k > 0 } \lVert f_k - \mathcal{Q}_T f _k \rVert_{L_2} +
			\left\lvert \lVert f_0 - \mathcal{Q}_T f_0 \rVert_{L_2} - \left( \int_{\R} \lvert \widehat{f} (\omega) \rvert^2 \mathcal{E}(T\omega) \mathrm{d}\omega \right)^{1/2}  \right\rvert \nonumber \\
		& = 	(I) + (II),	
	\end{align}	 	
	where we used \eqref{eq:1forprop3} in $(i)$. 
	As a consequence, \eqref{eq:firstboundQtf} follows if one shows that the two terms $(I)$ and $(II)$  in \eqref{eq:inequalityagainandagain} are $O(T^{L+1})$.
	Using \eqref{eq:controlfk}, we have
	\begin{align}
		\lVert f_k - \mathcal{Q}_T f_k \rVert_{L_2}^2 
		&=
		\int_{k/T < \lvert \omega \rvert \leq (k+1)/T} \lvert f_k(\omega) \rvert^2 \mathcal{E}(T\omega) \mathrm{d}\omega \nonumber \\
&= 
\int_{k/T < \lvert \omega \rvert \leq (k+1)/T} \lvert f_k(\omega) \rvert^2 \lvert \omega \rvert^{2(L+1)} \frac{T^{2p}}{\lvert \omega \rvert^{2(L-p+1)}} \frac{ \mathcal{E}(T\omega)}{ T^{2p} \lvert \omega \rvert^{2p}}  \mathrm{d}\omega \nonumber \\	
 		& \leq
 		 C^2 \frac{T^{2p}}{(k/T)^{2(L-p+1)}} \int_{\R}  \lvert f_k(\omega) \rvert^2 \lvert \omega \rvert^{2(L+1)} \mathrm{d}\omega \nonumber \\	
 		&= C^2 \frac{T^{2(L+1)}}{k^{2(L-p+1)}} \lVert f_k^{(L+1)} \rVert_{L_2}^2,
\end{align}
where the inequality is due to $\lvert \omega \rvert \geq k/T$ over the domain and to \eqref{eq:boundEhyp}, which implies that $C^2 \geq \sup_{\omega \in \R} \mathcal{E}(\omega) / \max(1, \lvert \omega\rvert^p) \geq \sup_{\lvert \omega \rvert \geq 1} \mathcal{E}(\omega) / \lvert \omega\rvert^p$. Summing over $k>0$, we deduce that
\begin{align} \label{eq:boundI}
	(I)
	& \leq C T^{L+1} \sum_{k>0} \frac{1}{k^{L-p+1}} \lVert f_k^{(L+1)}\rVert_{L_2} \nonumber \\
	& \leq C T^{L+1} \left( \sum_{k>0} \frac{1}{k^{2(L-p+1)}} \right)^{1/2}
	\left( \sum_{k>0} \lVert f_k^{(L+1)}\rVert_{L_2}^2\right)^{1/2} \nonumber \\
	&= C T^{L+1} \sqrt{\zeta(2(L-p+1))} \lVert (f-f_0)^{(L+1)} \rVert_{L_2} = O(T^{L+1}),
\end{align}
where the second inequality is derived from Cauchy-Schwarz, and $\zeta$ is the Riemann zeta function.
For the second term, we remark that, again due to \cite[$(27)$]{Blu1999b}, $\lVert f_0 - \mathcal{Q}_T f_0 \rVert_{L_2}^2   = \int_{\lvert \omega \rvert \leq 1/T} \lvert \widehat{f}(\omega)\rvert^2 \mathcal{E}(T\omega) \mathrm{d}\omega$. Therefore, using the relation $\lvert \lVert g  \rVert_{L_2} - \rVert h \rVert_{L_2} \rVert \leq \lVert g - h \rVert_{L_2}$, which is a consequence of the Minkowski inequality, we deduce that
\begin{align} \label{eq:boundII}
(II)
 & \leq \left( \int_{\lvert \omega \rvert > 1/T } \lvert \widehat{f} (\omega) \rvert^2 \mathcal{E}(T\omega) \mathrm{d}\omega\right)^{1/2} \nonumber \\
 &= 
 \left( \int_{\lvert \omega \rvert > 1/T } \lvert \widehat{f} (\omega) \rvert^2 \lvert \omega \rvert^{2(L+1)} \frac{T^{2p}}{\lvert \omega \rvert^{2(L-p+1)}} \frac{\mathcal{E}(T\omega)}{T^{2p}\lvert \omega \rvert^{2p}} \mathrm{d}\omega\right)^{1/2} \nonumber \\
& \leq 
C T^{p} T^{L-p+1} \left( \int_{\lvert \omega \rvert > 1/T } \lvert \widehat{f} (\omega) \rvert^2 \lvert \omega \rvert^{2(L+1)}\mathrm{d}\omega \right)^{1/2} \nonumber \\
&= C T^{L+1} \lVert (f-f_0)^{(L+1)} \rVert_{L_2} = O(T^{L+1}),
\end{align}
where the inequality once more follows from $\lvert \omega \rvert \geq 1/T$ over the domain and from our assumption \eqref{eq:boundEhyp}.  Combining \eqref{eq:boundI} and \eqref{eq:boundII} in \eqref{eq:inequalityagainandagain} completes the proof.
 	\end{proof}
 	
 	Our next result connects the expression $\int_{\R} \lvert \widehat{f}(\omega)\rvert^2 \mathcal{E}(Tw) \mathrm{d} \omega$ to the Taylor expansion of $\mathcal{E}$.
 	
 	\begin{proposition} \label{prop:taylorarrive}
Let $(\bm{\varphi}, \bm{\tilde{\varphi}})$ be a set of $N$ basis and sampling functions that are biorthonormal and provide an approximation scheme of order $L$. We assume moreover that the kernel $\mathcal{E}$ given by \eqref{eq:estd} is $(2L+1)$-times continuously differentiable and satisfies 
\begin{align} \label{eq:boundEderiv}
\lvert \mathcal{E}^{(2L+1)}({\omega}) \rvert \leq C^2 \max(1 , \lvert \omega \rvert^{2p})
\end{align}
 for some $C>0$, some integer $0 \leq p \leq L$, and every $\omega \in \R$. Then, for every $f \in W^{L+p+1/2}_2(\R)$, we have
\begin{align}\label{eq:controlerror2}
	\int_{\R} \lvert \widehat{f}(\omega)\rvert^2 \mathcal{E}(Tw) \mathrm{d} \omega
	=  {\frac{\mathcal{E}^{(2L)}(0)}{(2L)!}} \lVert f^{(L)} \rVert_{L_2}^2 T^{2L} + O(T^{2L+1}).
	\end{align}
 	\end{proposition}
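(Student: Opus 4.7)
The plan is to Taylor-expand the kernel $\mathcal{E}$ around $\omega = 0$, identify the $T^{2L}$ coefficient via Plancherel's identity, and absorb the residual integral using the growth control on $\mathcal{E}^{(2L+1)}$ from hypothesis \eqref{eq:boundEderiv}.

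My first observation is that the order-$L$ property of the scheme combined with the quasi-biorthonormality of Definition \ref{def:bioL} forces $\mathcal{E}$ to vanish at the origin to order $2L$, i.e.\ $\mathcal{E}^{(j)}(0) = 0$ for $j = 0, \ldots , 2L-1$; this follows from a generalized Strang--Fix analysis of \eqref{eq:eopt}--\eqref{eq:estd} and is proven in \cite{Blu1999b}. Combined with the assumed $(2L+1)$-fold regularity, Taylor's formula with integral remainder gives
\begin{align*}
\mathcal{E}(T\omega) = \frac{\mathcal{E}^{(2L)}(0)}{(2L)!}(T\omega)^{2L} + R(T\omega), \quad R(x) = \frac{1}{(2L)!}\int_0^x (x-s)^{2L}\mathcal{E}^{(2L+1)}(s)\dint s.
\end{align*}
Substituting into the left-hand side of \eqref{eq:controlerror2} and applying Plancherel to the polynomial term (using $\widehat{f^{(L)}}(\omega) = (\mathrm{j}\omega)^L \widehat{f}(\omega)$) immediately produces the announced main term $\frac{\mathcal{E}^{(2L)}(0)}{(2L)!} T^{2L}\lVert f^{(L)}\rVert_{L_2}^2$, leaving the residual integral $\int_\R \lvert \widehat{f}(\omega)\rvert^2 R(T\omega)\dint\omega$ to be shown $O(T^{2L+1})$.

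To control the residual I would bound $R(x)$ pointwise. The hypothesis $\lvert \mathcal{E}^{(2L+1)}(s)\rvert \leq C^2 \max(1, \lvert s \rvert^{2p})$, coupled with splitting the interval of integration in $R(x)$ at $\lvert s \rvert = 1$ whenever $\lvert x \rvert > 1$, leads by an elementary computation to the uniform estimate $\lvert R(x)\rvert \leq C_1\bigl(\lvert x \rvert^{2L+1} + \lvert x \rvert^{2L+2p+1}\bigr)$ for all $x \in \R$. Substituting $x = T\omega$, the residual integral is thus bounded above by
\begin{align*}
C_1 T^{2L+1}\int_\R \lvert\widehat{f}(\omega)\rvert^2\lvert\omega\rvert^{2L+1}\dint\omega + C_1 T^{2L+2p+1}\int_\R \lvert\widehat{f}(\omega)\rvert^2\lvert\omega\rvert^{2L+2p+1}\dint\omega,
\end{align*}
and both integrals are finite precisely because $f \in W_2^{L+p+1/2}(\R)$ (the second directly; the first since $p \geq 0$). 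For $T \to 0$, both terms are $O(T^{2L+1})$, concluding the estimate.

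The main obstacle I anticipate is the bookkeeping in the pointwise bound on $R$: the two growth regimes of $\mathcal{E}^{(2L+1)}$ (bounded near $0$, polynomial growth of order $2p$ at infinity) must be correctly balanced against the available Sobolev regularity of $f$, and the choice $L+p+1/2$ in the hypothesis is tight exactly because of the worst-case $\lvert\omega\rvert^{2L+2p+1}$ weight arising in the residual integral. Once this pointwise estimate is secured, the rest of the argument---Taylor expansion at the origin, Plancherel, and the split into leading and residual contributions---closely parallels the $p=0$ case treated in \cite{Blu1999b} and in Proposition \ref{prop:makingthefirstbound}.
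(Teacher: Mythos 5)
Your proposal is correct and follows essentially the same route as the paper: vanishing of $\mathcal{E}^{(j)}(0)$ for $j\leq 2L-1$ from symmetry and quasi-biorthonormality, a Taylor expansion of $\mathcal{E}$ at the origin (you use the integral remainder, the paper the Lagrange form), Plancherel for the leading $T^{2L}$ term, and the growth bound \eqref{eq:boundEderiv} together with $f\in W_2^{L+p+1/2}(\R)$ to absorb the remainder into $O(T^{2L+1})$. The only differences are cosmetic bookkeeping in how the remainder is bounded, so no further comparison is needed.
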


\begin{proof}
	The kernel $\mathcal{E}$ being symmetric, we deduce that $\mathcal{E}^{(2k+1)}(0) = 0$ for $k=0,\ldots, (L-1)$. 
	Moreover, the condition of $(\bm{\varphi}, \bm{\tilde{\varphi}})$ ensures that $\mathcal{E}^{(2k)}(0)=0$ for $k=0,\ldots, (L-1)$, together with $\mathcal{E}^{(2L)}(0) \neq 0$. Therefore, $\mathcal{E}$ being $(2L+1)$-times continuously differentiable, the Taylor expansion around the origin is given by
	\begin{align} \label{eq:taylorstuff}
		\mathcal{E}(\omega) = \frac{\mathcal{E}^{(2L)}(0)}{(2L)!} \omega^{2L} + \frac{\mathcal{E}^{(2L+1)}(\omega \theta)}{(2L+1)!} w^{2L+1},
	\end{align}
	with $\theta = \theta(\omega) \in (0,1)$. Using \eqref{eq:taylorstuff}, we deduce that
	\begin{align} \label{eq:controlemoica}
	\int_{\R} \lvert \widehat{f}(\omega)\rvert^2 \mathcal{E}(Tw) \mathrm{d} \omega
	&=
	\int_{\R} \lvert \widehat{f}(\omega)\rvert^2 \frac{\mathcal{E}^{(2L)}(0)}{(2L)!} T^{2L} \omega^{2L} \mathrm{d} \omega \nonumber \\
 &  \quad 	+ 
	\frac{T^{2L+1}}{(2L+1)!} \int_{\R} \lvert \widehat{f} (\omega)\rvert^2 \omega^{2L+1} \mathcal{E}^{(2L+1)}(\omega T \theta(\omega T) ) \mathrm{d}\omega
	\nonumber \\
 	&= 
 	\frac{\mathcal{E}^{(2L)}(0)}{(2L)!} \lVert f^{(L)} \rVert_{L_2}^2 T^{2L} \nonumber \\
 	& \quad +	
	\frac{T^{2L+1}}{(2L+1)!} \int_{\R} \lvert \widehat{f} (\omega)\rvert^2 \omega^{2L+1} \mathcal{E}^{(2L+1)}(\omega T \theta(\omega T) ) \mathrm{d}\omega.
	\end{align}
	Due to \eqref{eq:boundEderiv} and $0 < \theta(\omega T) < 1$,
	\begin{align}
		\mathcal{E}^{(2L+1)}(\omega T \theta(\omega T)) \leq C^2 \max(1,\lvert \omega \rvert^{2p} T^{2p} ) \leq C^2 (1 + \lvert \omega \rvert^{2p} T^{2p}), 
	\end{align}
	from which we deduce that
	$ \int_{\R} \lvert \widehat{f} (\omega)\rvert^2 \omega^{2L+1} \mathcal{E}^{(2L+1)}(\omega T \theta(\omega T) ) \mathrm{d}\omega \leq C^2 (\lVert f^{(L+1/2)} \rVert_{L_2}^2 + T^{2p} \lVert f^{L+p+1/2} \rVert_{L2}^2)$.  Injecting this to \eqref{eq:controlemoica} implies
	\begin{align}
	 \int_{\R} \lvert \widehat{f}(\omega)\rvert^2 \mathcal{E}(Tw) \mathrm{d} \omega
	-
	\frac{\mathcal{E}^{(2L)}(0)}{(2L)!} \lVert f^{(L)} \rVert_{L_2}^2 T^{2L} 
		  &=  C^2 \frac{T^{2L+1}}{(2L+1)!}  (\lVert f^{(L+1/2)} \rVert_{L_2}^2 \nonumber \\
		  & \hphantom{=  C^2 \frac{T^{2L+1}}{(2L+1)!}  (\lVert}+ T^{2p} \lVert f^{(L+p+1/2)} \rVert_{L2}^2) \nonumber \\
	  & = O(T^{2L+1}), 
	  	\end{align}
 which concludes the proof.
	\end{proof}
	
	Note that we use derivatives of fractional order $\gamma = L+1/2$ and $\gamma = L+p+1/2$ in the proof, which are defined in the Fourier domain as $\mathcal{F} \{f^{(\gamma)} \} (\omega) = (\mathrm{j} \omega)^{\gamma} \widehat{f}(\omega)$.
		Finally, we conclude with an extension of \cite[Theorem 4]{Blu1999b} to sampling functions that are not necessarily bounded in Fourier domain. 
	
	\begin{theorem} \label{theo:extendblunser}
	We consider an approximation scheme $(\bm{\varphi},\bm{\tilde{\varphi}})$ with $N$ basis and sampling functions such that
	\begin{itemize}
	\item the basis functions are rapidly decaying $L_2$ functions such that the family $\{ \varphi_{i} (\cdot - Nk \}_{i = 1\cdots N, k \in \Z}$ is a Riesz basis in the sense of \eqref{eq:Rieszcondition}, with approximation power of order $L\geq 1$ (see Definition \ref{def:orderL});
	\item The sampling functions are rapidly decaying generalized functions such that $(\bm{\varphi},\bm{\tilde{\varphi}})$ is biorthonormal of order $L$ (see Definition \ref{def:bioL});
	\item There exists an integer $0\leq p \leq L$ such that, for any $\omega \in \R$, any $0\leq k \leq L$, and any $1 \leq i \leq N$,
		\begin{align}\label{eq:boundtildephi}
		\lvert \widehat{\tilde{\varphi}}_i^{(k)} (\omega) \rvert \leq C \max( 1 , \lvert \omega \rvert^p).
		\end{align}
		\end{itemize}
		Then, for any $f \in W^{L+\max(p+1/2 , 1)}(\R)$, we have that
		\begin{align}\label{eq:toutvabien}
		\lVert f - \mathcal{Q}_T f \rVert_{L_2} \underset{T\rightarrow 0}{\sim} \sqrt{\frac{\mathcal{E}^{(2L)}(0)}{(2L)!}} \lVert f^{(L)} \rVert_{L_2} T^L. 
		\end{align}
	\end{theorem}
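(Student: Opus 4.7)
The plan is to chain Propositions \ref{prop:makingthefirstbound} and \ref{prop:taylorarrive}, which between them already almost spell out \eqref{eq:toutvabien}. Concretely, Proposition \ref{prop:makingthefirstbound} reduces the $L_2$ error to its Fourier-side counterpart,
\begin{equation*}
\lVert f - \mathcal{Q}_T f \rVert_{L_2} = \left( \int_{\R} \lvert \widehat{f}(\omega) \rvert^2 \mathcal{E}(T\omega) \mathrm{d}\omega \right)^{1/2} + O(T^{L+1}),
\end{equation*}
while Proposition \ref{prop:taylorarrive} expands this Fourier integral as $\frac{\mathcal{E}^{(2L)}(0)}{(2L)!} \lVert f^{(L)} \rVert_{L_2}^2 T^{2L} + O(T^{2L+1})$. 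Taking square roots and using $\sqrt{1+O(T)} = 1 + O(T)$ then gives $\lVert f - \mathcal{Q}_T f \rVert_{L_2} = \sqrt{\mathcal{E}^{(2L)}(0)/(2L)!}\,\lVert f^{(L)}\rVert_{L_2} T^L + O(T^{L+1})$, which is precisely the asymptotic equivalence \eqref{eq:toutvabien}, the leading coefficient being nonzero by the order-$L$ biorthonormality argument already invoked in the proof of Proposition \ref{prop:taylorarrive}.

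The substance of the proof is therefore in verifying, from the theorem's hypotheses, the bounds \eqref{eq:boundEhyp} and \eqref{eq:boundEderiv} required by the two propositions, together with matching Sobolev regularity. The latter is immediate: Proposition \ref{prop:makingthefirstbound} needs $W^{L+1}_2(\R)$ and Proposition \ref{prop:taylorarrive} needs $W^{L+p+1/2}_2(\R)$, which combine to $W^{L+\max(1,p+1/2)}_2(\R)$, exactly the theorem's assumption. For \eqref{eq:boundEhyp}, I would split $\mathcal{E} = \mathcal{E}_{\min} + \mathcal{E}_{\mathrm{res}}$. The minimum kernel $\mathcal{E}_{\min}$ depends only on $\widehat{\bm{\varphi}}$ and $\mathbf{G}_{\bm{\varphi}}^{-1}$: the rapid decay of the basis functions makes each $\widehat{\varphi}_i$ smooth with bounded derivatives, while the Riesz condition \eqref{eq:Rieszcondition} makes $\mathbf{G}_{\bm{\varphi}}(\omega)^{-1}$ smooth and uniformly bounded (and, by induction via $(\mathbf{G}^{-1})' = -\mathbf{G}^{-1}\mathbf{G}'\mathbf{G}^{-1}$, its derivatives are uniformly bounded as well). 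So $\mathcal{E}_{\min}$ and all its derivatives are bounded. The residual $\mathcal{E}_{\mathrm{res}}$ is a quadratic form in $(\widehat{\bm{\tupvarphi}} - \widehat{\bm{\varphi}_d})$; since $\widehat{\bm{\varphi}_d} = \mathbf{G}_{\bm{\varphi}}^{-1}\widehat{\bm{\varphi}}$ is smooth and bounded, the polynomial growth comes purely from $\widehat{\bm{\tupvarphi}}$, so \eqref{eq:boundtildephi} at $k=0$ gives $|\mathcal{E}_{\mathrm{res}}(\omega)| \lesssim \max(1,|\omega|^{2p})$, yielding \eqref{eq:boundEhyp}.

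Checking \eqref{eq:boundEderiv} is the delicate part. One must expand $\mathcal{E}^{(2L+1)}$ via Leibniz on the quadratic form $(\widehat{\bm{\tupvarphi}}-\widehat{\bm{\varphi}_d})^{*T} \mathbf{G}_{\bm{\varphi}} (\widehat{\bm{\tupvarphi}}-\widehat{\bm{\varphi}_d})$; using that $\widehat{\bm{\tupvarphi}}$ is smooth (since $\bm{\tupvarphi}$ is a rapidly decaying generalized function) and that \eqref{eq:boundtildephi} controls its derivatives by $\max(1,|\omega|^p)$, combined with the bounded derivatives of $\widehat{\bm{\varphi}_d}$ and $\mathbf{G}_{\bm{\varphi}}$, one arrives at $|\mathcal{E}^{(2L+1)}(\omega)| \lesssim \max(1,|\omega|^{2p})$. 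The main obstacle I anticipate is precisely this bookkeeping: one must track every distribution of the $2L+1$ derivatives among the factors and show that no configuration exceeds the target polynomial growth, which requires leveraging the smoothness afforded by the rapid decay of $\bm{\tupvarphi}$ in addition to the quantitative bound \eqref{eq:boundtildephi}. Once this is done, the asymptotic \eqref{eq:toutvabien} follows by the two-line composition described in the first paragraph.
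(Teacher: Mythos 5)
Your proposal is correct and follows essentially the same route as the paper: reduce the theorem to Propositions \ref{prop:makingthefirstbound} and \ref{prop:taylorarrive}, verify \eqref{eq:boundEhyp} and \eqref{eq:boundEderiv} by splitting $\mathcal{E}$ into $\mathcal{E}_{\min}$ (bounded with bounded derivatives via rapid decay and the Riesz condition) and $\mathcal{E}_{\mathrm{res}}$ (controlled through \eqref{eq:boundtildephi} and a Leibniz expansion), and then compose the two asymptotics. The only cosmetic difference is the remainder exponent after taking the square root ($O(T^{L+1})$ versus the paper's $O(T^{L+1/2})$), which is immaterial for \eqref{eq:toutvabien}.
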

	
	The two first conditions in Theorem \ref{theo:extendblunser} are necessary to ensure that the approximation scheme has an approximation power of order $L$. The last condition allow us to show that such an approximation power is attained with restricted condition on the sampling functions having possibly unbounded Fourier transform.

	\begin{proof}
			We prove that the conditions of Theorem \ref{theo:extendblunser} imply that we fulfill the hypotheses of both Propositions~\ref{prop:makingthefirstbound} and~\ref{prop:taylorarrive}. 
			
			Since ${\bm{\varphi}},\tilde{\bm{\varphi}}$ are rapidly decaying (generalized) functions, their Fourier transforms $\widehat{\bm{\varphi}}$ and $\widehat{\tilde{\bm{\varphi}}}$ are infinitely differentiable.
			The same holds true for $\widehat{\varphi}_d = \mathbf{G}_{\bm{\varphi}}^{-1} \widehat{\varphi}$ since $\mathbf{G}_{\bm{\varphi}}$ is a matrix-valued infinitely differentiable function with infinitely differentiable inverse $\mathbf{G}_{\bm{\varphi}}^{-1}$ due to the Riesz basis condition. This implies that $\mathcal{E}$ is infinitely differentiable and therefore $(2L+1)$-times continuously differentiable.
			
			The basis functions are rapidly decaying, hence in $L_1(\R)$, impling that $\widehat{\bm{\varphi}}$ is bounded.
			The Riesz basis condition then easily implies that both $\mathbf{G}_{\bm{\varphi}}$ and its inverse are bounded as functions of $\omega$. 
			It then follows that $\mathcal{E}_{\min}(\omega)$ is bounded, while $\mathcal{E}_{\mathrm{res}}(\omega)$ is dominated by $\lVert (\widehat{\tilde{\bm{\varphi}}} -   \widehat{\bm{\varphi}}_d ) (\omega) \rVert$, the latter being dominated by $\max(1, \lvert \omega\rvert^{2p})$ due to \eqref{eq:boundtildephi}. 
			It finally implies \eqref{eq:boundEhyp} for some constant $C>0$.
			Similarly, for $k\leq L$, the function $\widehat{\varphi}^{(k)}$ is bounded, being the Fourier transform of $t\mapsto t^k \varphi(t)$, which is in $L_1(\R)$ due to the rapid decay of $\varphi$. Again, this property is transferred to the derivative of $\mathbf{G}_{\bm{\varphi}}$ and its inverse. Then, exploiting Leibnitz rule, one shows that $\mathcal{E}_{\min}^{(2L+1)}$ is bounded, while $\mathcal{E}^{(2L+1)}$ is a sum of products constituted of bounded terms and of two terms of the form $\widehat{\tilde{\varphi}}_i(\omega)$, which are controlled by \eqref{eq:boundtildephi}. Putting things together, we easily deduce \eqref{eq:boundEderiv} for some constant $C>0$.
			
		Finally, the hypotheses of Propositions \ref{prop:makingthefirstbound} and \ref{prop:taylorarrive} are satisfied, implying \eqref{eq:firstboundQtf} and \eqref{eq:controlerror2}  together with
			\begin{align}
					\lVert f - \mathcal{Q}_T f \rVert_{L_2}= \sqrt{\frac{\mathcal{E}^{(2L)}(0)}{(2L)!}} \lVert f^{(L)} \rVert_{L_2} T^L + O(T^{L+1/2}).
			\end{align}
			This proves \eqref{eq:toutvabien}.
	\end{proof}
	
		Note that \cite[Theorem 4]{Blu1999b} corresponds to Theorem \ref{theo:extendblunser} with $p=1$, and with less restrictive assumptions on $\bm{\varphi}$ and $\bm{\tilde{\varphi}}$ (essentially, the rapid decay is replaced by polynomial decay adapted to $L$).
		The condition $f \in W^{L+ \max(p+1/2 , 1)}(\R)$ reflects the cost of covering the scenario of possibly irregular sampling functions with unbounded Fourier transform.

\subsection{Approximation Properties of Hermite Splines}
We  can now  evaluate the approximation error on $f$ in different frameworks, including the Hermite scheme.
In our analysis, for $(\bm{\varphi}, \bm{\tilde{\varphi}})$ being fixed, we also quantify the error on the derivative $f'$ when we approximate it by $(\mathcal{Q}_T f)'$.  We therefore study the quantities $\lVert f - \mathcal{Q}_T f \rVert_{L_2}$ and $ \lVert f' - (\mathcal{Q}_T f)' \rVert_{L_2}$.
	Knowing the order of approximation $L\geq 1$, the quality of the approximation is quantified by  the two asymptotic constants
		\begin{align}
		\mathbf{C}_{\bm{\varphi}}^{\bm{\tupvarphi}}
		= \left( \begin{array}{c}   {C}_{\bm\varphi,1}^{\bm\tupvarphi}  \\ {C}_{\bm\varphi,2}^{\bm\tupvarphi}  \end{array} \right)	
		= 
		\left( \begin{array}{c}
		\lim_{T \rightarrow 0} T^{-L} \lVert f^{(L)} \rVert_{L_2}^{-1} \lVert f - \mathcal{Q}_T f \rVert_{L_2} \\
		\lim_{T \rightarrow 0} T^{-(L-1)} \lVert f^{(L-1)} \rVert_{L_2}^{-1} \lVert f' - (\mathcal{Q}_T f)' \rVert_{L_2}
		\end{array} \right).
		\end{align}
The asymptotic constant of $C_{\bm{\varphi},2}^{\bm{\tupvarphi}}$ can be computed with the same tools as that of $C_{\bm{\varphi},1}^{\bm{\tupvarphi}}$. Using integration by parts, we indeed have that
\begin{align}\label{eq:QTp}
\left(\mathcal{Q}_T f\right)' &= \frac{1}{T} \sum_{i=1}^{N} \sum_{k\in \Z} \left\langle f, \frac{1}{T}\tilde{\varphi}_{i} \left( \frac{\cdot}{T} - k\right) \right\rangle \varphi'_i\left(\frac{\cdot}{T}-k\right) \\
&= \frac{1}{T} \sum_{i=1}^{N} \sum_{k\in \Z} \left\langle f', \frac{1}{T}\tilde{\varphi}_{i,\mathrm{int}} \left( \frac{\cdot}{T} - k\right) \right\rangle \varphi'_i\left(\frac{\cdot}{T}-k\right),
\end{align}
where the new sampling functions are best defined in the Fourier domain as
\begin{align} \label{eq:newtilde}
\widehat{\tilde{\varphi}_{i,\mathrm{int}}}(\omega)=-\frac{1}{\jj \omega} \widehat{\tilde{\varphi}_i}(\omega).
\end{align}
As one power of $T$ gets lost in the differentiation process, the rate of decay of the error on $f'$ is thus equal to $(L-1)$. Note that the apparent singularity in~\eqref{eq:newtilde} around $0$ is counter-balanced in the analysis since one only approximates functions $f'$ that are derivatives functions, hence for which $\widehat{f'} (\omega) = \mathrm{j} \omega \widehat{f} (\omega)$. 

In the Hermite framework, for which $N=2$, sampling functions are taken as $\tvarphi_1=\delta$ and $\tvarphi_2= - \delta'$ and basis functions as~\eqref{eq:phi1} and~\eqref{eq:phi2}. For comparison purpose, we also consider two relevant schemes that fit our analysis framework: classical cubic B-splines and interlaced derivative sampling. Cubic B-spline approximation corresponds to $N=1$, with $\tvarphi=\sum_{k\in\mathbb{Z}} (b^3)^{-1}[k]\delta(\cdot -k)$, where $(b_1^3)^{-1}$ is the direct B-spline filter, and $\tvarphi=\beta^3$ the cubic B-spline~\cite{Unser1999}. Interlaced derivative sampling can be defined in the more general framework of generalized sampling without band-limited constraints~\cite{Unser1998}. In this setting, $N=2$ and the sampling functions correspond to $\tvarphi_1=\delta$ and $\tvarphi_2= - \delta'\left(\cdot-\frac{1}{2}\right)$. 
The basis functions are constructed from the cubic B-spline to allow for a fair comparison, and are given by
\begin{align}
\widehat{\varphi}_1(\omega)&=\frac{3 \ee^{-2 \jj  \omega} \left(-1+ \ee^{\jj  \omega}\right)^4}{2 \omega^4},\label{eq:zerub1} \\
\widehat{\varphi}_2(\omega)&=\frac{\ee^{-2 \jj  \omega} \left(-1+\ee^{\jj  \omega}\right)^4 \left(1+\ee^{\jj  \omega} \left(-4+\ee^{\jj  \omega}\right)\right)}{\left(2-2 \ee^{2 \jj  \omega}\right) \omega^4}.\label{eq:zerub2}
\end{align}
in the Fourier domain. A comprehensive description of their derivation is provided in~\cite{Uhlmann2017Thesis}.
It is worth noting that the sampling functions do not correspond to the dual functions in these frameworks. This is easily motivated by practical considerations: the sampling process must, in practice, be implemented with digital filtering, excluding dual functions due to their continuous nature. The dual functions can, however, still be constructed from the Gram matrix following~\eqref{eq:phid} so as to estimate the optimal approximation error.

We now reveal the approximation power of these three different schemes. The results are known for the cubic B-splines~\cite{Blu1999a} and are included for comparison purposes. For Hermite approximation and interlaced sampling, they are deduced from Theorem \ref{theo:extendblunser}, and are not included in the multigenerator framework \cite{Blu1999b}, whose hypotheses exclude the use of derivative samples.

\begin{proposition}
Let $(\bm{\varphi}, \bm{\tvarphi})$ be one of the three approximation schemes considered above. Then, we have that
\begin{align} \label{eq:sixerreurs}
	\lVert f - \mathcal{Q}_T f \rVert_{L_2} & \underset{T\rightarrow 0}{\sim} \frac{1}{72 \sqrt{70}}
	\lVert f^{(4)} \rVert_{L_2} T^4
	, \text{ and} \nonumber \\
		\lVert f' - (\mathcal{Q}_T f)' \rVert_{L_2} & 
		\underset{T\rightarrow 0}{\sim} 
		\frac{1}{12 \sqrt{210}}
	\lVert f^{(3)} \rVert_{L_2} T^3,
\end{align}
for every $f \in W_2^{5}(\R)$ (cubic B-splines), or $f\in W_2^{11/2}(\R)$ (Hermite splines or interlaced derivative sampling).
Moreover, in the three cases, we have the following relation to the optimal approximation scheme associated to $\bm{\varphi}$: 
\begin{align}\label{eq:new}
\lVert f - \mathcal{Q}_T f \rVert_{L_2} & \underset{T\rightarrow 0}{\sim} \sqrt{\frac{10}{3}} \lVert f - \mathcal{P}_T f \rVert_{L_2}, \text{ and} \nonumber \\
\lVert f' - (\mathcal{Q}_T f)' \rVert_{L_2}  & \underset{T\rightarrow 0}{\sim}  \lVert f' - (\mathcal{P}_T f)' \rVert_{L_2},
\end{align}
for every $f \in W_2^{5}(\R)$  (cubic B-splines), or $f\in W_2^{11/2}(\R)$  (Hermite splines and interlaced derivative sampling).
\end{proposition}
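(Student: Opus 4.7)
The plan is to apply Theorem \ref{theo:extendblunser} to each of the three schemes separately, in each case both for the error $\lVert f - \mathcal{Q}_T f \rVert_{L_2}$ and for the derivative error $\lVert f' - (\mathcal{Q}_T f)' \rVert_{L_2}$, and then to read off the explicit asymptotic constants from the Taylor expansion of the error kernel $\mathcal{E}$ defined in \eqref{eq:eopt}--\eqref{eq:estd}. The comparison \eqref{eq:new} with the orthogonal projector $\mathcal{P}_T$ will be obtained by specialising the same analysis to $\bm{\tupvarphi} = \bm{\varphi}_d$, in which case $\mathcal{E} = \mathcal{E}_{\min}$, and then taking the ratio of the asymptotic constants.

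For the hypotheses of Theorem \ref{theo:extendblunser}, the order is $L = 4$ in all three schemes. The basis functions are compactly supported piecewise cubics (cubic B-spline, Hermite) or finite linear combinations of shifts of $\beta^3$ (interlaced), hence rapidly decaying $L_2$-functions forming a Riesz basis and reproducing $1, t, t^2, t^3$. The sampling functions have Fourier transforms that are $2\pi$-periodic functions times at most a factor $\mathrm{j}\omega$, so \eqref{eq:boundtildephi} is met with $p = 0$ (cubic B-spline) or $p = 1$ (Hermite, interlaced). Biorthonormality of order $4$ is checked directly by matching the first four moments of $\bm{\tupvarphi}$ with those of $\bm{\varphi}_d$, equivalently by confirming exact polynomial reproduction. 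Theorem \ref{theo:extendblunser} then yields $\lVert f - \mathcal{Q}_T f \rVert_{L_2} \sim \sqrt{\mathcal{E}^{(8)}(0)/8!}\,\lVert f^{(4)}\rVert_{L_2}\,T^4$ in the required Sobolev class.

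For the derivative error, I use the reformulation \eqref{eq:QTp}--\eqref{eq:newtilde} to view $(\mathcal{Q}_T f)'$ as an approximation of $g = f'$ with basis functions $\varphi_i'$ and sampling functions $\tilde{\varphi}_{i,\mathrm{int}}$; the new scheme has approximation order $3$. The apparent singularity of $\widehat{\tilde\varphi}_{i,\mathrm{int}}(\omega) = -\widehat{\tilde\varphi_i}(\omega)/(\mathrm{j}\omega)$ at the origin is cancelled by the factor $\mathrm{j}\omega$ in $\widehat{f'}(\omega) = \mathrm{j}\omega\,\widehat{f}(\omega)$, so the analysis of Lemma \ref{lemma:blunser} and Propositions \ref{prop:makingthefirstbound}--\ref{prop:taylorarrive} applies verbatim to $g = f'$ after folding the $\mathrm{j}\omega$ factor into the error kernel. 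Theorem \ref{theo:extendblunser} then gives the corresponding $T^3$ asymptotic with constant $\sqrt{\mathcal{E}_{\mathrm{d}}^{(6)}(0)/6!}$, where $\mathcal{E}_{\mathrm{d}}$ is the error kernel of the derivative scheme.

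The explicit constants $1/(72\sqrt{70})$ and $1/(12\sqrt{210})$ are then obtained by direct Taylor expansion of $\mathcal{E}$ and $\mathcal{E}_{\mathrm{d}}$ at $0$ using the Fourier expressions of $\bm{\varphi}$, $\bm{\tupvarphi}$ and the Gram matrix \eqref{eq:gram}. The computation is routine for cubic B-splines since $N=1$; for the Hermite and interlaced schemes, the $2\times 2$ Gram matrix must be inverted and expanded to order $\omega^8$. The identity $\mathcal{E}^{(8)}(0)/\mathcal{E}_{\min}^{(8)}(0) = 10/3$ common to the three schemes, together with the vanishing $\mathcal{E}_{\mathrm{d,res}}^{(6)}(0) = 0$, then gives \eqref{eq:new}. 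The main obstacle is the symbolic Taylor expansion of $\mathbf{G}_{\bm\varphi}^{-1}$ up to the required order in the Hermite case, where the dual functions are not compactly supported and the entries of $\mathbf{G}_{\bm\varphi}$ are infinite trigonometric series; this is a lengthy but elementary calculation, whose coincident leading behaviour across the three schemes reflects the fact that each of them exactly reproduces the same space of polynomials up to degree $3$.
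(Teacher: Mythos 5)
Your proposal follows essentially the same route as the paper: verify the hypotheses of Theorem \ref{theo:extendblunser} for each scheme (with $p=0$ for cubic B-splines and $p=1$ for the two derivative-based schemes), reuse the reformulation \eqref{eq:QTp}--\eqref{eq:newtilde} for the derivative error, compute the constants by symbolic Taylor expansion of the error kernels, and obtain \eqref{eq:new} by comparing with the kernel $\mathcal{E}_{\min}$ of the orthogonal projector. One small correction: the interlaced basis functions \eqref{eq:zerub1}--\eqref{eq:zerub2} are not finite linear combinations of shifts of $\beta^3$ (the factor $(2-2\ee^{2\jj\omega})^{-1}$ is an IIR filter), but they are exponentially decaying, so the rapid-decay hypothesis still holds and your argument goes through unchanged.
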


\begin{proof}
	The three frameworks are readily known to define approximation schemes of order $L = 4$~\cite{Unser1999,Uhlmann2016,Unser1998}. All the considered basis functions specify a Riesz basis, are rapidly decaying (Hermite and cubic B-splines are compactly supported, while the basis functions for interlaced derivative sampling, despite being non-compactly supported, are exponentially decaying~\cite{Unser1998}), and reproduce polynomials up to degree $3$. The sampling functions are rapidly decaying generalized functions. Indeed, they are compactly supported for the Hermite and interlaced derivative sampling schemes. For cubic B-splines, we have seen that $\tvarphi=\sum_{k\in\mathbb{Z}} (b^3)^{-1}[k]\delta(\cdot -k)$, where the sequence $(b^3)^{-1}$ is known to be  exponentially decaying~\cite{Unser1999}, implying the result.
	In addition, for the Hermite and interlaced derivative sampling schemes (for cubic splines, respectively), \eqref{eq:boundtildephi} is clearly satisfied with $p=1$ ($p=0$, respectively).
	The conditions of Theorem \ref{theo:extendblunser} are therefore satisfied for $L=4$, implying \eqref{eq:toutvabien}. 
	
	The value of $\mathcal{E}^{(2L)}(0)$ is computed by computing the Taylor expansion of the kernel $\mathcal{E}$ around $0$\footnote{We relied on the technical computing software Mathematica $11$ for this task.}. 
	The analysis of the approximation error on the derivative follows the same principle, the kernel being given for $(\bm{\varphi}', \tilde{\bm{\varphi}}_{\mathrm{int}})$ according to \eqref{eq:newtilde}, giving \eqref{eq:sixerreurs}.
	
	We obtain the asymptotic behaviour of the optimal approximation errors $\lVert f - \mathcal{P}_T f \rVert_{L_2}$ and $\lVert f' - (\mathcal{P}_T f)' \rVert_{L_2}$ associated to $\bm{\tvarphi} = \bm{\varphi}_d$ in the same way, leading to \eqref{eq:new}.
\end{proof}

Our findings call for the following comments.
\begin{itemize}
	\item The three compared schemes have the same approximation order, the same approximation constant, and the same optimal approximation constant (associated to $(\bm{\varphi}, \bm{\varphi}_d)$) for reconstructing both $f$ and its derivative.
	
		\item In all cases, the sampling functions result in a near-to-optimal asymptotic constant $C_{\bm{\varphi},1}^{\bm{\tvarphi}}$ for the reconstruction of $f$. This minor discrepancy of a factor of $\sqrt{10/3} \approx 1.83$ is expected from the fact that the sampling functions are not dual functions. It is also remarkable that the reconstruction of the derivative, while not being associated to the dual functions, is associated to an optimal approximation constant $C_{\bm{\varphi},2}^{\bm{\tvarphi}}$. 
		
	\item 
		Dual functions, although offering the smallest approximation error, have strong practical disadvantages.
	First, the $\bm{\varphi}_d$, are non-compactly supported splines. 
	More importantly, they cannot be easily implemented as they do not have a digital filter equivalent (\emph{i.e.}, computing the $\langle f , \bm{\varphi}_d\rangle$ does not only depend on knowing $f$ and its derivative on a fixed grid, in contrast to usual cubic B-splines and Hermite splines). Finally, they do not possess a closed-form expression in general. For these reasons, the sampling functions $\bm{\tvarphi}$ classically used in the three considered approximation schemes, although non-optimal, are preferable in practice.
\end{itemize}

In Table~\ref{tab:errors}, we sum up these findings, including the asymptotic constants for cubic B-spline, interlaced derivative sampling and cubic Hermite splines and their comparison with optimal constants.
Hermite interpolation is thus not a unique way of approximating a function and its first derivative, even if one wishes the error to remain close to optimal. The notable difference lies in the fact that the Hermite scheme provides functions that are simultaneously of finite support, which is not the case for interlaced derivative sampling (see~\eqref{eq:zerub1} and~\eqref{eq:zerub2}), and interpolating, which is not the case for cubic B-splines. 

{\renewcommand{\arraystretch}{1.5}
	\begin{table}
	\footnotesize
\centering
\caption{Comparison of approximation methods.\label{tab:errors}}
\begin{tabular}{l|c|c|c}
  \hline
  Approximation Method & \multicolumn{1}{|p{1.4cm}|}{\centering Cubic \\ B-splines} & \multicolumn{1}{|p{2.8cm}|}{\centering Interlaced Derivative Sampling} & \multicolumn{1}{|p{1.3cm}}{\centering Hermite \\ Splines} \\
  \hline
    Digital filter implementation & \cmark & \cmark & \cmark \\ 
  Interpolating  & \xmark & \cmark & \cmark \\
  Finite Support & \cmark & \xmark & \cmark \\
  Closed-form expression & \cmark & \xmark & \cmark \\ \hdashline 
Rate of Decay ($L$) & $4$ & $4$ & $4$ \\
  Asymptotic Constant ($C_{\bm{\varphi},1}^{\bm{\tupvarphi}}$) & $\frac{1}{72\sqrt{70}}$ & $\frac{1}{72\sqrt{70}}$ & $\frac{1}{72\sqrt{70}}$ \\
  Ratio to optimal ($C_{\bm{\varphi},1}^{\bm{\tupvarphi}}$) & $\frac{\sqrt{3}}{\sqrt{10}}$ & $\frac{\sqrt{3}}{\sqrt{10}}$ & $\frac{\sqrt{3}}{\sqrt{10}}$ \\ \hdashline 
Rate of Decay ($L-1$) & $3$ & $3$ & $3$ \\
  Asymptotic Constant ($C_{\bm{\varphi},2}^{\bm{\tupvarphi}}$) & $\frac{1}{12\sqrt{210}}$ & $\frac{1}{12\sqrt{210}}$ & $\frac{1}{12\sqrt{210}}$ \\
  Ratio to optimal ($C_{\bm{\varphi},2}^{\bm{\tupvarphi}}$) & $1$ & $1$ & $1$ \\
  \hline
\end{tabular}
\end{table}


\section{Concluding Remarks} \label{sec:conclusion}
Our work focused on the formal investigation of two practical aspects of Hermite splines, namely their short support and their good approximation properties. 
We show that Hermite splines are of minimal support among pairs of functions with similar reproduction properties, and provide a framework to quantify their approximation power. The latter result not only allows us to prove that Hermite splines are asymptomatically identical to cubic B-splines, but also offers a more general framework for the quantitative approximation of functions and their derivatives.

In summary, Hermite splines are found to offer an approximation scheme that (1) has the same approximation power than the notorious cubic B-splines, (2) is interpolating (possibly with the derivative), (3) is based on maximally localized compactly supported basis functions. The resulting cost is the need to use two basis functions instead of a single one, and have access to derivative samples.

\section{Acknowledgements}
This work was supported by core funding from EMBL, the Swiss National Science Foundation under Grant $200020\_162343/1$, and the European Research Council under Grant H2020-ERC (ERC grant agreement No 692726 - GlobalBioIm).

\textit{Competing interests:} The authors have no competing interests.

\bibliographystyle{IEEEtran} 
\bibliography{biblio}
\end{document}